\newtheorem{theorem}{Theorem}[section]
\newtheorem*{theorem*}{Theorem}
\newtheorem{proposition}[theorem]{Proposition}
\newtheorem{corollary}[theorem]{Corollary}
\newtheorem{lemma}[theorem]{Lemma}
\newtheorem{conjecture}{Conjecture}
\theoremstyle{definition}
\newtheorem{definition}[theorem]{Definition}
\providecommand{\abs}[1]{\lvert#1\rvert}
\newcommand\restr[2]{\ensuremath{\left.#1\right|_{#2}}}
\renewcommand{\subset}{\subseteq}
\newcommand{\tl}{t_{\lambda}}
\newcommand{\tm}{t_{\mu}}
\newcommand{\Q}{\mathbb{Q}}
\newcommand{\Qp}{\mathbb{Q}_p}
\newcommand{\Z}{\mathbb{Z}}
\newcommand{\N}{\mathbb{N}}
\newcommand{\R}{\mathbb{R}}
\newcommand{\Lm}{\mathcal{L}}
\newcommand{\cO}{\mathcal{O}}
\newcommand{\ord}{\mathrm{ord}}
\newcommand{\Lring}{\Lm_{\text{ring}}}
\title{Differentiation in $P$-minimal structures\\ and a $p$-adic Local Monotonicity Theorem}
\author{Tristan Kuijpers and Eva Leenknegt}
\date{}
\begin{document}
\maketitle
\begin{abstract}	
We prove a $p$-adic, local version of the Monotonicity Theorem for $P$-minimal structures. The existence of such a theorem was originally conjectured by Haskell and Macpherson. We approach the problem by considering  the first order strict derivative. In particular, we show that, for a wide class of $P$-minimal structures, the definable functions $f: K \to K$ are almost everywhere strictly differentiable and satisfy the Local Jacobian Property. 	
\end{abstract}

\section{Introduction}
A major tool in the study of $o$-minimal structures is the Monotonicity Theorem, which states that for any $o$-minimal function $f:D \subseteq R\to R$, there exists a finite partition of $D$, such that on each part, $f$ is either constant or continuous and strictly monotone (see e.g. van den Dries \cite{vdd-98}).

When Haskell and Macpherson developed their theory of $P$-minimality \cite{has-mac-97} (a $p$-adic counterpart to the concept of $o$-minimality), a question that came up naturally was whether there would exist a $p$-adic version of this theorem. Of course, this question only makes sense if one can find a reasonable translation of the concept of \emph{monotonicity} to the $p$-adic context. 

Say that $z$ lies {between} $x$ and $y$ if $z$ is contained in the smallest ball that contains both $x$ and $y$. Using this notion, one can formulate a concept of monotonicity that works both in the real and the $p$-adic setting:
\begin{definition}
Let $F$ be a topological field. A function $f: F \to F$ is \emph{monotone} if, whenever $z$ lies between $x$ and $y$, then also $f(z)$ lies between $f(x)$ and $f(y)$.
\end{definition}
For ultrametric fields, this condition is equivalent to 
\begin{equation*} \label{eq:Ccond}
|x-z|\leqslant |x-y| \Rightarrow |f(x) - f(z)| \leqslant |f(x) - f(y)|.
\end{equation*}
Observe that if a function $f$ is monotone and $f(x) = f(y)$, then $f$ is constant \emph{between} $x$ and $y$.
A more detailed exploration can be found in \cite{sch-84}.

Extending this idea further, the following would be a natural translation to the $p$-adic context of (local) strict monotonicity. (Remember that in the real case, a strictly monotone function $f$ is either strictly increasing or decreasing, and hence we get a bijection between the domain of $f$ and the image of $f$.)
\begin{definition}  
Let $F$ be an ultrametric field.
A function $f: X \subseteq F \to F$ is said to be \emph{locally strictly monotone} on $X$ if for all $a \in X$, there exist balls $B_1$, $B_2$ such that $a \in B_1 \subseteq V$, $f$ maps $B_1$ bijectively onto $B_2$, and for all $x,y,z \in B_1$, 
	\[|x-z| < |x-y| \Rightarrow |f(x) -f(z)|<|f(x)-f(y)|.\]
\end{definition}
Having this notion in mind, Haskell and Macpherson \cite{has-mac-97} stated the following conjecture, which can be considered as a {local version} of the Monotonicity Theorem for $p$-adically closed fields $K$.
\begin{conjecture}
Let $f: X \subseteq K \to K$ be a function definable in a $P$-minimal structure $(K, \Lm)$. There exist definable disjoint subsets $U, V$ of $X$, with $X \setminus (U \cup V)$ finite, such that 
\begin{itemize} 
	\item[(a)] $\restr{f}{U}$ is locally constant, 
	\item[(b)] $\restr{f}{V}$ is locally strictly monotone on $V$.
\end{itemize}
\end{conjecture}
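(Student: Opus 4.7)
The plan is to deduce the conjecture formally from the two main technical results of the paper announced in the abstract: (i) every definable function $f : X \subseteq K \to K$ is strictly differentiable outside a set with empty interior, and (ii) wherever strict differentiability holds, $f$ satisfies the Local Jacobian Property. Once (i) and (ii) are in place, the conjecture is a short formal consequence; the real work is in (i)--(ii).

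First I would record the precise output I want to use. Let $F_0 \subseteq X$ be the set of points at which $f$ is not strictly differentiable. By (i), $F_0$ has empty interior in $X$; since $F_0$ is a definable subset of $K$, $P$-minimality of $(K,\Lm)$ forces $F_0$ to be finite. By (ii), at every $a \in X \setminus F_0$ there is a ball $B_a \ni a$ and a strict derivative $f'(a) \in K$ such that
\[ |f(x) - f(y)| \;=\; |f'(a)| \cdot |x - y| \qquad \text{for all } x, y \in B_a. \]

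The condition ``$f'(a) = 0$'' is first-order expressible via the strict derivative, so setting
\[ U = \{a \in X \setminus F_0 : f'(a) = 0\}, \qquad V = \{a \in X \setminus F_0 : f'(a) \neq 0\} \]
produces disjoint definable subsets of $X$ with $X \setminus (U \cup V) = F_0$ finite. On $U$, the Local Jacobian Property at $a$ with $|f'(a)| = 0$ forces $f$ to be constant on $B_a$, giving clause (a). On $V$, fix $a$ and shrink $B_a$ to a ball $B_1 \subseteq V$ around $a$; the identity $|f(x)-f(y)| = |f'(a)|\cdot|x-y|$ with $|f'(a)| > 0$ immediately gives injectivity of $\restr{f}{B_1}$ together with the strict implication
\[ |x - z| < |x - y| \;\Longrightarrow\; |f(x) - f(z)| < |f(x) - f(y)|. \]
Setting $B_2 = f(B_1)$, surjectivity $f : B_1 \twoheadrightarrow B_2$ (so $B_2$ is itself a ball and $f$ a bijection between them) is the standard consequence of the strong form of the Jacobian Property that is usually packaged together with strict differentiability in the $p$-adic setting, via a Hensel-type fixed-point argument on $B_a$.

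The main obstacle is entirely in establishing (i) and (ii). Unlike in $o$-minimality, where cell decomposition immediately yields $C^1$-smoothness of a definable function on an open-dense subset and the classical monotonicity proof then runs, $P$-minimal structures admit far wilder definable functions, and van den Dries' strategy from \cite{vdd-98} does not transfer. One has instead to work with a first-order surrogate for the derivative --- a definable approximation expressible in $\Lm$ whose almost-everywhere existence is forced by $P$-minimality applied to a cleverly chosen unary definable set of ``bad'' points --- and then bootstrap this first-order strict derivative into a genuine strict derivative together with the multiplicative control of $|f(x) - f(y)|$ on small balls. Pinning down exactly which additional hypothesis on $(K,\Lm)$ beyond bare $P$-minimality makes this bootstrap go through (the ``wide class'' of structures mentioned in the abstract) is where the actual difficulty of the paper lies.
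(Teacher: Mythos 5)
Your reduction is exactly the paper's: the conjecture is deduced from almost-everywhere strict differentiability (Theorem \ref{thm_bijna_overal_afleidbaar}) together with the Local Jacobian Property (Theorem \ref{thm_LJP}), with $U$ and $V$ cut out by the definable conditions $Df=0$ and $Df\neq 0$ and the bijection-onto-a-ball clause obtained by the same contraction/fixed-point argument. You also correctly identify that all the substance lies in those two inputs and that they are only established under an extra hypothesis (the paper's \emph{strict} $P$-minimality, needed for Lemma \ref{lemma_locally_constant}), so the conjecture is proved for that subclass rather than for arbitrary $P$-minimal structures --- precisely the paper's situation.
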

Unfortunately, Haskell and Macpherson could only prove a weaker version of this conjecture. The main motivation of this paper is to give a full proof (we even obtain a slightly more precise result). The key to the problem is the existence of the first order (strict) derivative of $P$-minimal functions.

\subsection{Differentiation in the $p$-adic context}
Ever since the end of the 19th century, the theory of differentiation (and integration) of real functions has been well established. However, the picture is not quite as rosy when considering $p$-adic functions. Whereas real analysis has become a basic tool (even for non-mathematicians), $p$-adic analysis is more subtle for several reasons.

One of the consequences of the ultrametric topology is that the mean value theorem no longer holds on $p$-adic fields. Because of this, even if we restrict to the category of \emph{nice} functions that have a continuous derivative, examples can be found of functions that behave badly: an injective function that has  a derivative which is zero everywhere, or a function that has nonzero derivative, yet is not injective in any neighbourhood of zero (see e.g. examples 26.4 and 26.6 in Schikhof's book \cite{sch-84}.)

To remedy some of the problems listed above, we will need to consider a stronger concept of differentiation.  A natural candidate is the following notion of strict differentiation (a detailed exposition of which can be found in Schikhof \cite{sch-84} or Robert \cite{rob-2000}):
\begin{definition}Let $X \subseteq K$ be an open set. A function $f:X \to K$ is \emph{strictly differentiable} at a point $a\in X$, with strict derivative $Df(a)$ if the limit
\[Df(a) = \lim_{(x,y) \to (a,a)} \frac{f(x)-f(y)}{x-y}\]
exists. 
\end{definition} 
To distinguish between both concepts, we use the notation $Df$ to refer to the strict derivative, and we write $f'$ for the normal derivative (as defined by Weierstrass). Obviously, $Df(a) = f'(a)$ whenever $Df(a)$ exists. 

If $f$ is strictly differentiable on an open set $U$, then $Df$ is continuous on $U$, by Proposition 27.2 of \cite{sch-84}, which means that a strictly differentiable function is automatically $C^1$.

Note that in the real case, every function $f$ for which $f'$ is continuous is automatically strictly differentiable, as can be seen easily by applying the mean value theorem. One way to look at it is that strict differentiation is a form of continuous differentiation where (consequences of) the mean value theorem are already built into the definition.

More recently, Bertram, Gl\"ockner and Neeb \cite{ber-glo-nee-04} developed a more general framework for differential calculus. When restricted to functions of one variable over ultrametric fields, their notion is equivalent to strict differentiation (see Section 6 of \cite{ber-glo-nee-04} for a comparison). There are also some differentiability results in the $C$-minimal setting, in Section 5 of \cite{kaz}. 

When using the stronger concept of strict differentiation, one can recover a number of the results that are foundational in real analysis. For example, a function that has nonzero strict derivative around a point $x_0$, will be injective on some neighborhood of $x_0$. However, some fundamental problems remain. For instance, the strictly differentiable function
\begin{equation} \label{badfunction}
g: \Q_p \to \Q_p: \sum_n a_np^n \mapsto  \sum_n a_np^{2n}
\end{equation}
is injective, and yet $Dg(x) = 0$ for all $x \in \Q_p$.   This example shows that, even with a stronger concept of differentiation, a nice theory will only be achievable if one also restricts to a more tame class of functions. Note that something similar has been done for real functions as well. Indeed, it is known that $o$-minimal functions $f: \R \to \R$ are (continuously) differentiable on a cofinite subset of $\R$ (see van den Dries \cite{vdd-98}).

Moreover, note that the function $g$ from \eqref{badfunction} is not $P$-minimal. Indeed, in $P$-minimal structures every infinite definable subset of the universe contains an open set. Clearly this is not true for $g(\Q_p)$.

\subsection{Basic definitions and facts}
Let us first review some basic facts about $P$-minimality (more details can be found in \cite{has-mac-97}). 

Let $\Lm$ be a language extending the ring language $\Lring= (+,-,\cdot, 0,1)$, and let $K$ be a $p$-adically closed field (that is, a field elementary equivalent, in the language of rings, to a finite field extension of $\Q_p$). The structure $(K, \Lm)$ is said to be \emph{$P$-minimal} if, for every elementary equivalent structure $(K',\Lm)$, any definable set $X\subset K'$ is $\Lring$-definable (with parameters from $K'$).

Examples of $P$-minimal structures include $p$-adic semi-algebraic sets and the structure of $p$-adic sub-analytic sets, as developed by Denef and van den Dries \cite{denef-vdd-88}. A function is said to be $P$-minimal if its graph is a $P$-minimal set.

By definable we always mean definable with parameters (and the underlying structure will be assumed to be $P$-minimal). A finite field extension of $\Qp$ will also be called a $p$-adic field. 

As a consequence, any definable subset of $K$ can be partitioned into a finite number of points and a finite number of open sets. This implies that every infinite definable subset of $K$ contains an open set, a fact which we will use quite often. The following lemma will also be used  extensively. 
\begin{lemma}[ Lemma 5.1 of \cite{has-mac-97}] \label{lemma_5.1}
	Let $f:X\subset K\to K$ be a function definable in a $P$-minimal structure $(K,\Lm)$. There is a cofinite subset $U\subset X$ such that $\restr{f}{U}$ is continuous.
\end{lemma}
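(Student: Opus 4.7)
The plan is to show that the discontinuity set $D := \{a \in X : f \text{ is not continuous at } a\}$ is finite. Since continuity at a point is first-order definable using the valuation and $X$ is definable, $D$ is definable. By $P$-minimality, if $D$ is infinite then it contains an open ball $B \subset X$, so it suffices to show that $f|_B$ cannot be discontinuous at every point of an open ball $B$.

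The first (and main) step reduces to a uniform oscillation tolerance on an open sub-ball. For each $n$ in the value group $\Gamma$ of $K$, the set
\[
D_n := \{ a \in B : \forall m \in \Gamma,\ \exists x \in B,\ v(x-a) \geq m \wedge v(f(x) - f(a)) \leq n \}
\]
is definable; the family is nested increasing in $n$ and covers $B$, possibly after discarding a sub-ball on which $f$ is unbounded in every neighbourhood (a case that can be handled separately by the same trick below applied to $1/f$). Define $n(a) := \min\{n : a \in D_n\}$ and $g(a) := \pi^{n(a)}$, where $\pi$ is a uniformizer. Then $g : B \to K$ is definable and its image lies in $S := \{\pi^k : k \in \Gamma\}$. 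A brief ultrametric computation shows $S$ has empty interior in $K$: any open ball $B(\pi^{k_0}, r) \subset S$ would, if $r \leq |\pi^{k_0}|$, be forced by ultrametricity to be a singleton, and if $r > |\pi^{k_0}|$, contain $0 \notin S$. Hence $g(B) \subset S$ is a definable set with empty interior, so by $P$-minimality it is finite. Consequently some fiber of $g$, equivalently some $D_{n_0}$, contains an open sub-ball $B' \subset B$, and with $\epsilon_0 := p^{-n_0}$ every point of $B'$ is a discontinuity point of $f$ with the same uniform tolerance $\epsilon_0$.

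The second step derives the contradiction from this uniform tolerance by analysing the image side. By $P$-minimality, $f(B')$ is either finite or contains an open ball. In the finite case, $B'$ is a finite union of the definable fibers $f^{-1}(y) \cap B'$, and $P$-minimality forces at least one such fiber to contain an open ball, on which $f$ is constant and hence continuous, contradicting discontinuity. In the infinite case, pick an open ball $C \subset f(B')$ and an open sub-ball $C' \subset C$ of radius strictly less than $\epsilon_0$. Then $f^{-1}(C') \cap B'$ is a definable subset of $B'$ which $f$ maps surjectively onto the infinite set $C'$ (since $C' \subset f(B')$); it is therefore infinite and, by $P$-minimality, contains an open ball $B'' \subset B'$. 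For any $x, y \in B''$ we have $f(x), f(y) \in C'$ and so $|f(x) - f(y)| < \epsilon_0$, directly contradicting the uniform tolerance $\epsilon_0$ at any point of $B''$.

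The hard part is the uniform-tolerance reduction in the first step: $P$-minimality is a statement about definable subsets of $K$, whereas the oscillation tolerance is naturally an invariant in the value group $\Gamma$. The detour through a uniformizer repackages this invariant as a definable subset of $K$ contained in the powers of $\pi$, which by a short computation has empty interior and is therefore forced by $P$-minimality to be finite. Once this reduction is in hand, the remaining image analysis is a routine double application of $P$-minimality to $f(B')$ and to the preimages of small balls.
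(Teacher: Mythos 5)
The paper gives no proof of this lemma: it is imported verbatim as Lemma~5.1 of Haskell--Macpherson, so your argument has to stand on its own. Your second step (analysing $f(B')$ once a uniform oscillation bound $\epsilon_0$ is secured on a sub-ball $B'$) is correct. The genuine gap is in the first step: the map $g(a)=\pi^{n(a)}$ is \emph{not} definable. Its graph would have to be cut out by a formula saying ``$v(t)=n(a)$ and $t\in\pi^{\Z}$'', but $\{\pi^{k}:k\in\Z\}$ is an infinite subset of $K$ with empty interior and hence is not definable in any $P$-minimal structure --- exactly the dichotomy you invoke two lines later. Dropping the clause $t\in\pi^{\Z}$ leaves only the definable relation ``$v(t)=n(a)$'', whose fibres over $a$ are whole annuli rather than points. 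In fact the definability of $g$ is equivalent to the finiteness of $\{n(a):a\in B\}$, which is precisely what you are trying to prove, so the step is circular; and the principle implicitly used --- that a definable value-group-valued invariant takes finitely many values --- is false in general, as $v$ itself shows. (A smaller issue: handling the locally-unbounded locus ``by the same trick applied to $1/f$'' can regress, since $1/f$ may again be locally unbounded wherever $f$ is.)

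The conclusion of your first step is nevertheless recoverable by a softer argument of the kind this paper uses repeatedly (see the proofs of Lemma~\ref{lemma_locally_constant} and Corollary~\ref{cor_USn_finite}): for $K$ a finite extension of $\Q_p$ there are only countably many sets $D_n$, each definable, and they cover the uncountable ball $B$ (a locally-unbounded point lies in every $D_n$, so no separate case is needed). If every $D_n$ were finite --- equivalently, had empty interior --- then $B$ would be countable. Hence some $D_{n_0}$ contains a ball $B'$, every point of which has oscillation at least $\epsilon_0=\lvert\pi\rvert^{n_0}$, and your second step finishes the proof. Note, however, that this establishes the lemma only for $p$-adic fields (uncountable, with value group $\Z$); the statement as quoted concerns arbitrary $P$-minimal structures, where $K$ may be countable and $\Gamma_K$ a nonstandard $\Z$-group, so the countability argument and your minimum over $\Gamma$ both need further care. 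That is the generality in which the original Haskell--Macpherson proof operates.
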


We also mention the following theorem, which is a corollary of  Theorem 71.2 of \cite{sch-84}. This will be used to deduce strict differentiability from normal differentiability.
\begin{theorem}\label{thm_71.2}
	Let $K$ be a complete non-archimedean field and $X\subset K$ an open set. If $f:X\to K$ is differentiable, then the set 
	\[\{x\in X\mid f\text{ is strictly differentiable at }x\}\]
	is dense in $X$.
\end{theorem}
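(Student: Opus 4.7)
The plan is to prove this via a Baire category argument applied to a well-chosen exhaustion of $X$ by closed sets. Write $\Phi(x,y) = (f(x)-f(y))/(x-y)$ for the difference quotient on the off-diagonal, and set
\[
\omega(a) = \limsup_{(x,y) \to (a,a),\, x \neq y} \bigl|\Phi(x,y) - f'(a)\bigr|.
\]
Then $f$ is strictly differentiable at $a$ precisely when $\omega(a) = 0$, so the goal reduces to showing that $\{\omega = 0\}$ is dense in $X$.

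For each pair of positive integers $n, k$, introduce
\[
F_{n,k} = \Bigl\{a \in X : \bigl|\Phi(x,a) - \Phi(y,a)\bigr| \leq p^{-n} \text{ for all } x, y \in B(a, p^{-k}) \setminus \{a\}\Bigr\}.
\]
Because $f$ is continuous and the defining condition involves only values of $f$, each $F_{n,k}$ is closed in $X$: if $a_m \to a$ with $a_m \in F_{n,k}$, then for any $x, y \in B(a, p^{-k}) \setminus \{a\}$ one eventually has $x, y \in B(a_m, p^{-k}) \setminus \{a_m\}$, and one passes to the limit using continuity of $f$. For fixed $n$, pointwise differentiability yields $X = \bigcup_k F_{n,k}$, since for $k$ large enough $|\Phi(x,a) - f'(a)| \leq p^{-n-1}$ whenever $0 < |x-a| \leq p^{-k}$, and the ultrametric inequality then gives $|\Phi(x,a) - \Phi(y,a)| \leq p^{-n-1} < p^{-n}$.

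Since $X$ is an open subset of the complete space $K$, it is Baire, and thus $V_n := \bigcup_k \mathrm{int}(F_{n,k})$ is dense open in $X$ for every $n$. The key local calculation is to show $\omega(a_0) \leq p^{-n}$ whenever $a_0 \in V_n$. Pick $m \geq k$ with $B := B(a_0, p^{-m}) \subset F_{n,k}$. For every $x \in B$, the membership $x \in F_{n,k}$ together with $f'(x) = \lim_{z \to x} \Phi(z,x)$ yields $|\Phi(z,x) - f'(x)| \leq p^{-n}$ for all $z \in B \setminus \{x\}$. Applying this with $z = a_0$, applying the analogous estimate centred at $a_0$ with $z = x$, and using $\Phi(x, a_0) = \Phi(a_0, x)$, the ultrametric triangle inequality gives $|f'(x) - f'(a_0)| \leq p^{-n}$ for every $x \in B \setminus \{a_0\}$. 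Combining the two bounds produces $|\Phi(y, x) - f'(a_0)| \leq p^{-n}$ for all distinct $x, y \in B$, i.e.\ $\omega(a_0) \leq p^{-n}$.

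A final application of Baire shows that $\bigcap_n V_n$ is still dense in $X$, and on this intersection $\omega$ is bounded by $p^{-n}$ for every $n$, hence $\omega = 0$. I expect the main technical subtlety to be the choice of the sets $F_{n,k}$: one is tempted to define them through the remainder $|f(x) - f(a) - f'(a)(x-a)|$, but this condition involves $f'(a)$, which the hypothesis does not assert to be continuous in $a$, so closedness is not automatic. Phrasing the condition purely through difference quotients of $f$ is what allows continuity of $f$ alone to establish closedness and thereby feed the Baire machine.
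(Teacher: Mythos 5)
Your argument is correct and essentially complete. Bear in mind that the paper does not actually prove this statement: it is quoted as a corollary of Theorem 71.2 of Schikhof's book, so your Baire-category proof is a self-contained substitute for that citation rather than a reproduction of anything in the text. The decomposition into the closed sets $F_{n,k}$, phrased purely through difference quotients of $f$ rather than through the (possibly discontinuous) function $f'$, is exactly the right move, as you yourself note; it makes closedness follow from continuity of $f$ alone, and the ultrametric identity $B(a,p^{-k})=B(a_m,p^{-k})$ for $|a-a_m|<p^{-k}$ renders the $a$-dependence of the ball harmless when passing to the limit. The covering $X=\bigcup_k F_{n,k}$, the density of $V_n=\bigcup_k \mathrm{int}(F_{n,k})$, and the local estimate $\omega(a_0)\leq p^{-n}$ on $V_n$ (obtained by letting the second argument of $\Phi$ tend to the centre and chaining two ultrametric inequalities through $f'(x)$) all check out. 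The one point you should patch is a domain issue: $B(a,p^{-k})$ need not be contained in $X$, so the quantification ``for all $x,y\in B(a,p^{-k})\setminus\{a\}$'' should read $B(a,p^{-k})\cap X\setminus\{a\}$, or else you should reduce at the outset to the case where $X$ is a single ball, which is legitimate since balls are clopen and density is a local property; with either fix, all three steps of the argument go through verbatim. Your proof in fact gives slightly more than the stated theorem: the set of points of strict differentiability contains the dense $G_\delta$ set $\bigcap_n V_n$, i.e.\ it is residual in $X$, not merely dense.
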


We write $\mathcal{O}_K$ for the valuation ring and $\Gamma_K$ for the value group. Let $\pi$ denote a fixed element with minimal positive valuation. Write $P_n$ for the set of nonzero $n$-th powers in $K$,  and $\lambda P_n$ for the coset $\{\lambda x\mid x\in P_n\}$, where $\lambda \in K$. Since $P_n$ has finite index in $K^\times$, one can choose a finite subset $\Lambda_n\subset K$ such that $K^{\times} = \cup_{\lambda\in\Lambda_n}\lambda P_n$. 

We let $B(x_0,\delta)$ denote the open ball with center $x_0$ and radius $\delta$, i.e. \[B(x_0,\delta) = \{x\in K\mid \abs{x-x_0}<\delta\}.\] We write $|K|=\{\abs{x}\mid x\in K\}$.  The notation $\abs{f'(x_0)}=+\infty$ means that 
\[\lim_{t\to 0}\left\lvert\frac{f(x_0+t)-f(x_0)}{t}\right\rvert = +\infty.\]

\subsection{Main results}
We cannot formulate our results for $P$-minimal structures in general. The main reason for this restriction is the following lemma, which will be essential. 
\begin{lemma}\label{lemma_locally_constant}
	Let $K$ be a $p$-adic field and let $f: X \subseteq K\to K$ be a differentiable function that is definable in a $P$-minimal structure. If $f'(x) =0$ for all $x \in X$, then there exists a finite partition of $X$ in parts $X_i$ such that $\restr{f}{X_i}$ is constant.
\end{lemma}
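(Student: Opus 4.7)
The plan is to prove that $f(X)$ is finite; the partition of $X$ by fibers of $f$ then gives the required finite partition into constant parts.

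Since $f$ is differentiable it is continuous, and by $P$-minimality $X$ is a finite union of points and open sets, so we may assume $X$ is open. By Theorem~\ref{thm_71.2} the definable set $S$ of points where $f$ is strictly differentiable is dense in $X$, and $Df(a)=f'(a)=0$ at every $a\in S$. Its complement in $X$ has empty interior, so it is finite by $P$-minimality, and we may assume $Df\equiv 0$ on $X$. Since $K$ is a finite extension of $\Q_p$, it is $\sigma$-compact: write $X=\bigcup_n X_n$ with $X_n:=X\cap B(0,|\pi|^{-n})$ bounded. If each $f(X_n)$ is finite, then $f(X)$ is countable; but every infinite definable subset of $K$ contains an open ball and hence is uncountable, so $f(X)$ must itself be finite. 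It thus suffices to show $f(X_n)$ is finite for each bounded $X_n$.

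Suppose for contradiction that $f(X_n)$ is infinite. Then it contains an open ball $B(c,\eta)$, and $Y:=f^{-1}(B(c,\eta))\cap X_n$ is a bounded open set with $f(Y)=B(c,\eta)$. Fix $N\in\N$. For each $a\in Y$, strict differentiability with $Df(a)=0$ yields a $\delta_a>0$, chosen small enough that $B(a,\delta_a)\subseteq Y$, such that $|f(x)-f(y)|\leq|\pi|^N|x-y|$ on $B(a,\delta_a)$, whence $f(B(a,\delta_a))\subseteq B(f(a),|\pi|^N\delta_a)$. Because ultrametric balls are nested-or-disjoint and their radii lie in the discrete set $|K|$ bounded above by $\diam(Y)$, the maximal balls among $\{B(a,\delta_a):a\in Y\}$ form a pairwise disjoint refinement $Y=\bigsqcup_i B(a_i,\delta_i)$. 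Letting $\mu$ denote the Haar measure on $K$ (translation-invariant, scaling as $\mu(\pi^N S)=|\pi|^N\mu(S)$),
\[
\mu(B(c,\eta)) \leq \sum_i \mu\bigl(B(f(a_i),|\pi|^N\delta_i)\bigr) = |\pi|^N\sum_i \mu(B(a_i,\delta_i)) = |\pi|^N\mu(Y)\leq |\pi|^N\mu(X_n).
\]
Since $\mu(B(c,\eta))>0$ is fixed while the right-hand side tends to $0$ as $N\to\infty$, this is the desired contradiction.

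The main obstacle to anticipate is the covering-and-measure estimate above: one needs (i) the existence of a pairwise disjoint refinement of an ultrametric ball cover of a bounded set, which uses discreteness of $|K|$, and (ii) the scaling property of Haar measure on $K$. Both are standard features of finite extensions of $\Q_p$, but must be invoked carefully. The use of \emph{strict} differentiability, furnished by Theorem~\ref{thm_71.2}, is essential: it provides the uniform bound $|f(x)-f(y)|\leq|\pi|^N|x-y|$ on the entire ball $B(a,\delta_a)$, whereas ordinary differentiability gives only a pointwise estimate at $a$, insufficient to control the image measure.
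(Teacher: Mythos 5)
Your argument is correct and is in essence the same covering-and-measure argument as the paper's: cover the domain by balls on which the image contracts by an arbitrarily small factor, sum the measures of the image balls, and conclude that the image is finite by $P$-minimality. Two points of comparison are worth making. First, your reduction to strict differentiability via Theorem~\ref{thm_71.2} is unnecessary, and your closing claim that strict differentiability is \emph{essential} here is incorrect: ordinary differentiability at $a$ with $f'(a)=0$ already gives $\lvert f(a+t)-f(a)\rvert<\epsilon\lvert t\rvert<\epsilon\delta_a$ for $\lvert t\rvert<\delta_a$, and since the ultrametric inequality only requires comparing each image point to the image of the \emph{center}, this pointwise estimate already traps $f(B(a,\delta_a))$ inside a ball of radius $\epsilon\delta_a$ --- which is all the measure estimate needs. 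This is exactly what the paper does, and it matters because the lemma is stated for merely differentiable $f$; your detour happens to be valid (the set of points of strict differentiability is definable and dense, hence cofinite) but it obscures that the weaker hypothesis suffices. Second, your explicit treatment of unbounded domains --- exhausting $X$ by bounded pieces $X_n$, showing each $f(X_n)$ is finite, and then using that a countable definable set is finite --- is a point where you are more careful than the paper, whose inequality $\mu(f(U))\leq\epsilon\mu(U)$ is only conclusive when $\mu(U)<\infty$; your patch is the natural one. The disjoint-refinement step via maximal balls is sound and is the standard justification for the paper's ``countable disjoint cover'' as well.
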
 
A similar result can be found in \cite{sch-84}. A further generalisation to the context of $p$-adic integration is given in Proposition \ref{prop:integral}.
For real functions with connected domain, this is a simple  consequence of the mean value theorem. However, this does not hold for general $p$-adic functions: the function $g$ defined in \eqref{badfunction} provides a counterexample.

In our proof of Lemma \ref{lemma_locally_constant} we use that every open cover of $K$ has a countable subcover, hence the extra condition that $K$ is a $p$-adic field. We do not know whether this condition is essential.

We will show that our main results hold for any $P$-minimal structure satisfying the following additional condition:
\begin{definition}
A $P$-minimal structure $(K, \Lm)$ is said to be \emph{strictly $P$-minimal} if there exists a finite field extension $K'$ of $\Q_p$, such that $(K', \Lm)$ is $P$-minimal, and $K$ and $K'$ are elementarily equivalent as $\Lm$-structures.
\end{definition} 
Note that if Lemma \ref{lemma_locally_constant} would be true for all $p$-adically closed fields, then the condition of \emph{strict $P$-minimality} could be replaced by \emph{$P$-minimality} for all subsequent results. 

When working with general $p$-adically closed fields, it may happen that the value group $|K^\times|$ (considered as a multiplicative group) is not contained in $\mathbb{R}^\times$. In this case the limit of a function (and the derivative) can still be defined by the usual $(\epsilon,\delta)$-definition, the only difference being that $\epsilon$ and $\delta$ will be elements of $|K|$ rather than $\mathbb{R}$.

Another new result (see Proposition \ref{lemma_dim_graph}) that is crucial to our proofs is the fact that $\dim(\overline{X}\setminus X) <\dim(X)$ for any set $X$ definable in a $P$-minimal structure. This was already known for $o$-minimal structures, but is new in the $P$-minimal case. Using an improved version of a result by Haskell and Macpherson (where we eliminated the assumption of definable Skolem functions, see Lemma \ref{lemma5.5}), we were able to give a very short proof of this result.
\\\\
The first main result is a $p$-adic analogue of the result we mentioned earlier for $o$-minimal functions.
\begin{theorem}\label{thm_bijna_overal_afleidbaar}
Let $f:X \subseteq K\to K$ be a function definable in a  strictly $P$-minimal structure. Then $f$ is strictly differentiable  on a cofinite subset of $X$.
\end{theorem}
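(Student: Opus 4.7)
\emph{Proof sketch.}

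My plan is to first establish ordinary differentiability on a cofinite subset of $X$, then upgrade to strict differentiability via Theorem~\ref{thm_71.2} and $P$-minimality. By Lemma~\ref{lemma_5.1}, we may assume $f$ is continuous on an open cofinite $X_0 \subseteq X$.

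\emph{Upgrading differentiability to strict differentiability.} Suppose it has been shown that $f$ is ordinarily differentiable on an open cofinite $U \subseteq X_0$. Theorem~\ref{thm_71.2} gives that the set $S \subseteq U$ of strict-differentiability points is dense in $U$. As $S$ is definable, so is $U \setminus S$; were the latter infinite, by $P$-minimality it would contain an open ball, contradicting the density of $S$. Hence $U \setminus S$ is finite, and $f$ is strictly differentiable cofinitely.

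\emph{Main step: ordinary differentiability.} I argue by contradiction. If the definable set of non-differentiability points in $X_0$ is infinite, by $P$-minimality it contains an open ball $B$ on which $f$ is continuous but nowhere differentiable. The plan is to analyze the difference quotient $q(x,t) := (f(x+t)-f(x))/t$ parametrically in $x$, by splitting it into its absolute value $|q(x,t)|$ and angular components $\mathrm{ac}_m(q(x,t)) \in (\cO_K/\pi^m\cO_K)^{\times}$ for $m \geq 1$, each definable in $(x,t)$. Parametric $P$-minimal analysis (made rigorous by strict $P$-minimality, which reduces us to $K$ a finite extension of $\Qp$) should show that on a cofinite subset of $B$ the limits $\alpha(x) := \lim_{t \to 0} |q(x,t)| \in |K|\cup\{0,+\infty\}$ and $\lim_{t \to 0}\mathrm{ac}_m(q(x,t))$ exist, because a definable function from a punctured neighborhood of $0$ into a discrete target is eventually constant on some definable fiber.

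Restricting to an open sub-ball $B' \subseteq B$ on which all these limits exist, I would split by $\alpha$. If $\alpha(x)=0$ on an open subset, then $q(x,t) \to 0$ there, so $f'(x)=0$ and $f$ is differentiable, contradicting our assumption. If $\alpha(x)$ is positive and finite on an open subset, then combining with the stabilisation of $\mathrm{ac}_m(q(x,t))$ for every $m$ gives Cauchy convergence of $q(x,\cdot)$ in $K$, so $f$ is again differentiable. If $\alpha(x)=+\infty$ on an open subset, then $f$ is locally injective there, a definable local inverse $g=f^{-1}$ exists with $g'\equiv 0$, and Lemma~\ref{lemma_locally_constant} forces $g$ to be locally constant, which is impossible. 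The hard part, and the place where Lemma~\ref{lemma_locally_constant} and strict $P$-minimality are both indispensable, is precisely the parametric-limit analysis together with the inverse-function step: without such tameness the function of \eqref{badfunction} shows that everything can go wrong.
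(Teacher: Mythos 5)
Your overall architecture (establish ordinary differentiability on a cofinite set, then upgrade via Theorem \ref{thm_71.2} plus $P$-minimality) and your treatment of the infinite-derivative case (local inverse with zero derivative, killed by Lemma \ref{lemma_locally_constant}) coincide with the paper's. But the central step of your argument rests on a false principle, and that is exactly where the paper does its real work. You claim that $\lim_{t\to 0}|q(x,t)|$ and $\lim_{t\to 0}\mathrm{ac}_m(q(x,t))$ exist on a cofinite set ``because a definable function from a punctured neighborhood of $0$ into a discrete target is eventually constant on some definable fiber.'' A definable function into a finite set need not be eventually constant near $0$: the indicator function of $P_n$ on $K^\times$ takes both values on every punctured neighborhood of $0$. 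What is true --- and what the paper proves via Lemma \ref{lemma_dim_graph}, Corollary \ref{cor_finite_values} and cell decomposition --- is that the difference quotient stabilizes along each coset $\lambda P_n$ for $n$ large enough, i.e.\ each \emph{directional} limit ${f'}^{\lambda}_{(n)}(x_0)$ exists or is infinite. ``Eventually constant on some definable fiber'' gives you at best these directional limits; it does not give existence of the full limit as $t\to 0$.

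The missing content is precisely the reconciliation of the finitely many directions: one must rule out a ball of points at which two directional derivatives ${f'}^{\lambda}$ and ${f'}^{\mu}$ exist but differ (the set $S_n$), or at which one is finite and another infinite (the set $T_n^0$). This is the hardest part of the paper's proof (Lemmas \ref{lemma_Sn_finite} and \ref{lemma_Tn_finite}): it uses continuity of the directional derivatives on the ball, an affine renormalization making ${f'}^{\lambda}(x_0)=0$ and ${f'}^{\mu}(x_0)=1$, Hensel's lemma to obtain $1+\pi^m\cO_K\subset P_n$, and a choice of increments with $\lvert t_{\mu}\rvert=\epsilon\lvert t_{\lambda}\rvert$ so that $t_{\lambda}+t_{\mu}$ remains in $\lambda P_n$, yielding two incompatible estimates for $\lvert f(x_0+t_{\lambda}+t_{\mu})-f(x_0+t_{\lambda})\rvert$. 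One also needs the countability-plus-definability argument of Corollaries \ref{cor_USn_finite} and \ref{cor_UTn_finite} to pass from finiteness of each $S_n$, $T_n$ to finiteness of the unions over all $n$. Your sketch contains no substitute for any of this, so the main step is a genuine gap rather than an alternative route.
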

It will then be straightforward to show the second main result of this paper:
\begin{theorem}[Local Jacobian Property]\label{thm_LJP}
	Let $f:X\subset K \to K$  be a function definable in a strictly $P$-minimal structure. There exists a finite set $I\subseteq X$, and a finite partition of $X\setminus I$ into definable open sets $X_i$, such that either $\restr{f}{X_i}$ is constant on $X_i$, or the following holds on $X_i$: for every $x$ in $X_i$, there is an open ball $B\subset X_i$ containing $x$, such that the map $\restr{f}{B}$ satisfies the following properties:
\begin{enumerate}
\item[(a)] $\restr{f}{B}$ is a bijection, and $f(B)$ is a ball,
\item[(b)] $f$ is strictly differentiable on $B$ with strict derivative $Df$,
\item[(c)]$\lvert Df\rvert$ is constant on $B$,
\item[(d)] for all $x,y \in B$, one has that $\lvert Df\rvert\lvert x-y\rvert = \lvert f(x)-f(y)\rvert$.
\end{enumerate}
\end{theorem}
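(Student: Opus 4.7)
The plan is to reduce to the case where $f$ is strictly differentiable with a continuous, non-vanishing strict derivative, and then read off (a)--(d) from the ultrametric inequality.

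First, I would apply Theorem \ref{thm_bijna_overal_afleidbaar} to obtain a cofinite $X_0 \subseteq X$ on which $f$ is strictly differentiable, and then apply Lemma \ref{lemma_5.1} to the definable function $Df : X_0 \to K$ to shrink $X_0$ further so that $Df$ is continuous there, collecting the finitely many discarded points in $I := X \setminus X_0$. I would split $X_0$ into the definable pieces $Z := \{x \in X_0 : Df(x) = 0\}$ and $Y := X_0 \setminus Z$. On $Z$ the ordinary derivative $f'$ vanishes identically, so Lemma \ref{lemma_locally_constant} yields a finite partition $Z = Z_1 \sqcup \cdots \sqcup Z_m$ with $f|_{Z_j}$ constant. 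The cells $Z_1,\ldots,Z_m$ together with $Y \cup I$ then form a finite definable partition of $X$: the first alternative of the theorem holds on every $Z_j$, and the only balls to check in the cell $Y \cup I$ are those contained in $(Y \cup I)\setminus I = Y$.

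To handle such a ball, I would fix $a \in Y$ and exploit $Df(a) \neq 0$ together with strict differentiability to find $\delta_0 > 0$ such that
\[\left|\frac{f(x)-f(y)}{x-y} - Df(a)\right| < |Df(a)| \qquad \text{for all distinct } x,y \in B(a,\delta_0);\]
the ultrametric inequality then forces $|f(x)-f(y)| = |Df(a)|\cdot|x-y|$ on $B(a,\delta_0)$. Continuity of $Df$ at $a$ similarly gives, after shrinking to some $\delta \in (0,\delta_0]$, that $|Df(x) - Df(a)| < |Df(a)|$ and hence $|Df(x)| = |Df(a)|$ on $B(a,\delta)$. Any ball $B$ with $a \in B \subseteq B(a,\delta)$ then sits inside $Y$ and automatically satisfies (b), while the two estimates above yield (c) and (d) respectively; property (d) in turn forces $f|_B$ to be injective and gives the inclusion $f(B) \subseteq B(f(a), |Df(a)|\cdot r)$, where $r$ is the radius of $B$.

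The main obstacle is completing (a) by upgrading this inclusion to an equality. This is exactly the ultrametric inverse function theorem for strictly differentiable maps with non-zero strict derivative: given a target $w \in B(f(a),|Df(a)|\cdot r)$, a preimage in $B$ can be produced by iterating the map $x \mapsto x - (f(x)-w)/Df(a)$, which is a strict contraction by the first displayed estimate (cf.\ \S 27 of \cite{sch-84}). Once this is recorded, the verification of (a) is complete and the theorem follows.
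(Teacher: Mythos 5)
Your proposal is essentially the paper's own proof: the same decomposition into the zero set of $Df$ (handled by Lemma \ref{lemma_locally_constant}) and its complement, the same ultrametric estimate $\left|\frac{f(x)-f(y)}{x-y}-Df(a)\right|<|Df(a)|$ yielding (c) and (d), and the same contraction-mapping argument with $x\mapsto x-(f(x)-w)/Df(a)$ for surjectivity in (a); your extra appeal to Lemma \ref{lemma_5.1} for continuity of $Df$ is harmless but unnecessary, since the paper reads off $|Df(a')|=|Df(a)|$ directly from the difference-quotient estimate at points $a'\in B$. The one omission is that Lemma \ref{lemma_locally_constant} is only available for $p$-adic fields, so your argument proves the theorem for finite extensions of $\Q_p$; the general strictly $P$-minimal case is then obtained, as in Section \ref{sec:def_fam}, by expressing the conclusion as a first-order statement about a definable family and transferring it along the elementary equivalence with a finite extension of $\Q_p$.
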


A global version of the above result was originally proven for semi-algebraic and sub-analytic sets by Cluckers and Lipshitz \cite{CLip}. Among other applications, it can be used in the study of $p$-adic and motivic integrals, see e.g. \cite{clu-loe-08}. The (global) Jacobian Property is also a valuable tool in the study of the geometry of definable sets (see e.g. \cite{clu-hal-2011} or \cite{clu-co-loe-2010}, where Lipschitz continuity was investigated). It is still an open question whether a global version of the Jacobian Property holds for general $P$-minimal structures.

Let us now return to the start of the introduction. The conjecture stated there is an immediate consequence of the Local Jacobian Property.
 If we combine this with Lemma \ref{lemma_5.1}, we obtain:
\begin{theorem}[$p$-adic Local Monotonicity]\label{thm:mono}
Let $f: X \subseteq K \to K$ be a function definable in a strictly $P$-minimal structure $(K, \Lm)$. There exist definable disjoint subsets $U, V$ of $X$, with $X \setminus (U \cup V)$ finite, such that 
\begin{itemize} 
	\item[(a)] $f$ is continuous on $U \cup V$,
	\item[(b)] there exists a finite partition of $U$ into sets $U_i$, such that $\restr{f}{U_i}$ is constant,
	\item[(c)] $f$ is locally strictly monotone on $V$.
\end{itemize}
\end{theorem}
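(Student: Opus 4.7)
The plan is to derive Theorem \ref{thm:mono} as a direct consequence of the Local Jacobian Property (Theorem \ref{thm_LJP}) combined with Lemmas \ref{lemma_5.1} and \ref{lemma_locally_constant}. I first apply the LJP to obtain a finite definable partition $X = X_1 \sqcup \cdots \sqcup X_n$ and a finite set $I \subseteq X$ such that for each $i$, either $f|_{X_i}$ is constant, or properties (a)--(d) of the LJP hold on every sufficiently small ball inside $X_i \setminus I$. I put all the $X_i$ of the first type into an initial ``constant'' pool.

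For each $X_i$ of the second type, on $X_i \setminus I$ the strict derivative $Df$ is a definable continuous function, so the sets $Y_i^0 := \{x \in X_i \setminus I : Df(x) = 0\}$ and $Y_i^+ := \{x \in X_i \setminus I : Df(x) \neq 0\}$ are definable and partition $X_i \setminus I$. On $Y_i^0$, the ordinary derivative is zero everywhere, so Lemma \ref{lemma_locally_constant} finitely partitions $Y_i^0$ into pieces on each of which $f$ is constant; I add these to the constant pool, call the resulting union $U$, and set $V := \bigcup_i Y_i^+$.

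To verify (c), fix $a \in Y_i^+$. Since $Df$ is continuous and $Df(a) \neq 0$, I may choose a ball $B_1 \ni a$ contained in $Y_i^+$ on which all conclusions of the LJP hold and $|Df|$ is a fixed nonzero constant. Property (a) gives a bijection $f|_{B_1} : B_1 \to f(B_1)$ between balls, and property (d) gives $|f(x) - f(y)| = |Df|\,|x - y|$ for all $x, y \in B_1$; multiplying the inequality $|x-z| < |x-y|$ by the nonzero constant $|Df|$ yields $|f(x) - f(z)| < |f(x) - f(y)|$, which is exactly the definition of local strict monotonicity at $a$.

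By construction $X \setminus (U \cup V) = I$ is finite, and $U$ is a finite union of constant pieces, so the cofiniteness assertion and (b) hold. For (a), I apply Lemma \ref{lemma_5.1} to $f|_{U \cup V}$ and absorb the finitely many remaining discontinuity points into the exceptional set. The only subtlety I anticipate is justifying the definability and continuity of $Df$ that I used to split into $Y_i^0$ and $Y_i^+$ and to pick the ball $B_1$; however, this follows from the first-order expressibility of strict differentiability (the limit in the definition of $Df$ is a formula in the language of $K$) together with the continuity of $Df$ noted in the introduction (Proposition 27.2 of \cite{sch-84}), so beyond this the argument is routine bookkeeping on top of the three quoted results.
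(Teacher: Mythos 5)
Your derivation is correct in substance, but it takes a different route from the paper's own proof of this theorem. The paper only invokes the deduction ``LJP $\Rightarrow$ local monotonicity'' implicitly, for $p$-adic fields; its labelled proof of Theorem \ref{thm:mono} for a general strictly $P$-minimal $K$ is a model-theoretic transfer: it first establishes a uniform, family version of the statement over a finite extension $K'$ of $\Q_p$ (Theorem \ref{thm:mono-family}), checks that each clause is expressible by an $\Lm$-sentence (using that in a $P$-minimal structure ``cofinite'' is equivalent to ``the complement contains no ball''), and then transfers along the elementary equivalence $K \equiv K'$. You instead take Theorem \ref{thm_LJP} at face value in its stated generality and read off the conclusion; this is legitimate, since the paper does assert the LJP for strictly $P$-minimal structures (itself obtained by the same transfer), and your verification of local strict monotonicity from clauses (a) and (d) of the LJP is exactly right. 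What the paper's route buys is that the transfer is performed once, at the level of the family statement; what yours buys is brevity, at the cost of hiding the transfer inside the quoted LJP.

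One step of your write-up does not survive in the stated generality: you apply Lemma \ref{lemma_locally_constant} to the sets $Y_i^0$, but that lemma is only proved for $p$-adic fields (its proof uses countable covers and the Haar measure), and the paper is explicit that it does not know whether it holds for general $p$-adically closed $K$ --- this is precisely why the ``strictly $P$-minimal'' hypothesis and the transfer machinery exist in the first place. Fortunately the step is vacuous: on a non-constant piece $X_i$, every interior point $a$ of $X_i \setminus I$ lies in a small ball $B$ on which clauses (a) and (d) of the LJP hold, and if $Df(a)=0$ then (c) and (d) force $f$ to be constant on $B$, contradicting (a) (a ball has more than one point). Hence $Y_i^0$ consists only of the finitely many non-interior points of $X_i \setminus I$ and can be absorbed into the exceptional set, with no appeal to Lemma \ref{lemma_locally_constant}. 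The same remark disposes of the isolated points of $Y_i^+$, which admit no ball $B_1 \subseteq V$ and must likewise be moved into the finite exceptional set.
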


In section \ref{sec:mainproofs}, we show that our main results hold for $p$-adic fields (i.e. finite field extensions of $\Q_p$). As a next step, we
generalize to definable families of functions in Section \ref{sec:def_fam}. This will allow us to deduce the validity of our results for the wider class of strictly $P$-minimal structures.

\subsection*{Acknowledgements} The authors would like to thank Raf Cluckers for his advice and encouragement during the preparation of this paper. 

\section{Proofs of the main results}

We start with some observations on $P$-minimal functions. First, it is easy to see that the following lemma, which was originally proven by Denef \cite[Lemma 7.1]{denef-84} for semi-algebraic sets, is in fact valid for $P$-minimal structures in general.
\begin{lemma}[Denef] \label{denef}
Let $S \subseteq K^{m+q}$ be a set definable in a $P$-minimal structure $(K, \Lm)$. Let $\pi_m: K^{m+q} \to K^m$ denote the projection onto the first $m$ coordinates. \item Assume there exists $M \geq 1$ such that for all $y \in \pi_m(S)$, the fibers $\pi_{m}^{-1}(y)$ are nonempty and contain at most $M$ points.  Then there exists a definable function  $g: \pi_m(S) \to S$, such that $(\pi_m\circ g)(y)= y$ for all $y \in \pi_m(S)$. 
\end{lemma}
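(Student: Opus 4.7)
My plan is to induct on the codimension $q$, with the base case $q=1$ carrying the essential content.

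For the inductive step $q>1$, given $S \subseteq K^{m+q}$ with $\pi_m$-fibers of size at most $M$, I set $T := \pi_{m+q-1}(S) \subseteq K^{m+(q-1)}$. Since projecting cannot increase fiber cardinalities, both the $\pi_m$-fibers of $T$ and the $\pi_{m+q-1}$-fibers of $S$ remain of size at most $M$. Applying the inductive hypothesis to $T$ yields a definable section $h : \pi_m(S) \to T$ (noting $\pi_m(T) = \pi_m(S)$), and applying the base case $q=1$ to $\pi_{m+q-1} : S \to K^{m+q-1}$ yields a definable section $g' : T \to S$. Then $g := g' \circ h$ is the required section of $\pi_m$.

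For the base case $q=1$, the fibers $F_y := \{x \in K : (y,x) \in S\}$ are finite of size at most $M$. I would stratify $\pi_m(S)$ definably as $\bigsqcup_{k=1}^M Y_k$ with $Y_k := \{y : |F_y| = k\}$ and run an auxiliary induction on $M$. The non-trivial step of this induction handles $y \in Y_M$: let $d(y) := \min\{v(x - x') : x, x' \in F_y, \ x \neq x'\}$, a definable $\GK$-valued function. At this depth the smallest closed ball containing $F_y$ refines into at least two disjoint sub-balls of larger $v$-radius, each containing a proper nonempty subset of $F_y$. Using a fixed ordering of the finite residue field, one can definably select one such sub-ball (via the angular component of $(x - c(y))/\pi^{d(y)}$ for a definable reference point $c(y)$), and restrict $S$ to pairs $(y,x)$ with $x$ in the chosen sub-ball. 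The resulting definable subset $S' \subseteq S$ has fibers of size strictly less than $M$, so the inductive hypothesis applies.

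The main obstacle will be phrasing the sub-ball selection as a genuine $\Lring$-formula with parameters in $y$, because the angular component at the variable depth $d(y)$ must be uniformly definable. This is the case on any $p$-adically closed field, so the construction, which mirrors Denef's original argument from the semi-algebraic setting in \cite{denef-84}, carries over to arbitrary $P$-minimal structures without further modification.
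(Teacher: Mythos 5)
The paper never actually proves this lemma: it is quoted from Denef \cite{denef-84} with the one-line remark that his semi-algebraic argument goes through for $P$-minimal structures, so the only thing to compare your attempt against is Denef's original construction, which your outline is visibly modelled on. Your outer induction on $q$ is fine (the fibers of $\pi_{m+q-1}$ on $S$ do inject into the fibers of $\pi_m$, so both applications of the hypothesis are legitimate), and so is the stratification of the base case by fiber size. The gap is in the one step that carries all the content: the definable selection of a sub-ball at depth $d(y)$. To speak of ``the angular component of $(x-c(y))/\pi^{d(y)}$'' you need a definable point $c(y)$ lying in the smallest ball containing $F_y$, i.e.\ with $\ord(x-c(y))\geq d(y)$ for every $x\in F_y$, and you never produce one; producing it is exactly the hard part of the lemma. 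The obvious candidate, the barycenter $\frac{1}{k}\sum_{x\in F_y}x$, is definable but fails whenever $p$ divides $k=\lvert F_y\rvert$: for $F_y=\{1,-1\}$ in $\Q_2$ the barycenter is $0$, which lies outside the minimal ball $1+2\mathcal{O}_K$, so $(x-c(y))/\pi^{d(y)}$ is not a unit and its residue does not separate the two sub-balls. Absent such a point, the sub-balls at level $d(y)$ form an unordered finite set with no canonical labelling, and ``a fixed ordering of the finite residue field'' has nothing to act on; note also that the naive alternative of stripping off the common initial digits one at a time is not first-order, since the number of common digits is unbounded in $y$.

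This is repairable, but the repair is the actual proof. One workable route: start from the barycenter $c_0(y)$, observe that $\ord(x-c_0(y))\geq d(y)-\ord(k)$ for all $x\in F_y$, and run an iteration of length at most $\ord(k)$ (uniformly bounded since $k\leq M$) in which at each stage one either splits $F_y$ into a proper nonempty definable subset --- by the value of $\ord(x-c)$, or by the coset $\lambda P_n$ containing $x-c$ for a suitable fixed $n$ divisible by the order of the multiplicative group of the residue field --- or, if no split occurs, replaces $c$ by a definably improved center one level deeper; since the fiber genuinely splits at level $d(y)$, this terminates within the bounded number of steps and the whole selection is a single formula. Your closing assertion that the construction ``carries over without further modification'' waves away precisely this bookkeeping. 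The rest of your outline, including the remark that the resulting selection formula is an $\Lm$-formula because it only adjoins $\Lring$-definable valuation-theoretic conditions to the formula defining $S$, is correct and is indeed why the semi-algebraic argument transfers to general $P$-minimal structures.
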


One of the questions posed by Haskell and Macpherson in \cite{has-mac-97} was whether the assumption of definable Skolem functions could be eliminated from Remark 5.5 of their paper. Since they only needed Skolem functions for finite fibers of the same size, the result from Lemma \ref{denef} suffices. Therefore we have that:
\begin{lemma}\label{lemma5.5}
  Let  $f: X \subseteq K^n \to K$ be a function definable in a $P$-minimal structure $(K, \Lm)$. Let $Y$  be the set 
  \[ Y = \{y \in X  \mid f \text{  is defined and continuous in a neighbourhood of } y\},\]
 then $\dim\left(X \setminus Y\right)<\dim(X)$.
\end{lemma}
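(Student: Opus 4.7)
The plan is to proceed by induction on $n$. The base case $n = 1$ is precisely Lemma \ref{lemma_5.1}: for $X$ infinite, the set of discontinuities of $f$ is finite, hence has dimension $0 < 1 = \dim X$, and for $X$ finite the statement is vacuous.

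For the inductive step let $Z = X \setminus Y$, and argue by contradiction assuming $\dim Z = \dim X$. After permuting coordinates I would consider the projection $\pi: K^n \to K^{n-1}$ onto the first $n-1$ coordinates and analyse the fibres. For each $a \in \pi(X)$, Lemma \ref{lemma_5.1} applied to the one-variable function $t \mapsto f(a,t)$ on the fibre $X_a := \{t \in K \mid (a,t) \in X\}$ produces a finite exceptional set $E_a$; assembling these yields a definable set $E = \{(a,t) \in X \mid t \in E_a\}$ with finite fibres over $\pi(E)$. This is exactly where the original Haskell--Macpherson argument required Skolem functions, and where Lemma \ref{denef} takes over: after partitioning $\pi(E)$ into definable pieces on which the fibre cardinality $|E_a|$ is constant (hence uniformly bounded), Lemma \ref{denef} realises $E$ as a finite union of graphs of definable maps from subsets of $K^{n-1}$ into $K$, so $\dim E \leq \dim \pi(E) \leq \dim X - 1$ in the \emph{thick-fibre} case where the generic fibres of $\pi|_X$ are one-dimensional. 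A symmetric fibring in each of the other coordinate directions, invoking the inductive hypothesis in place of Lemma \ref{lemma_5.1}, yields a further definable set $W$ of dimension strictly less than $\dim X$ that controls transverse discontinuities.

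The main obstacle is the classical trap that separate continuity off $E \cup W$ does not imply joint continuity, together with the \emph{thin-fibre} case where the generic fibres of $\pi|_X$ are already finite and the fibrewise step gives no information. Both are handled by a further use of definability: the oscillation
\[
\omega(y) := \lim_{\delta \to 0} \diam\, f(B(y,\delta) \cap X)
\]
is an $\Lm$-definable function of $y$, and a final application of Lemma \ref{lemma_5.1} to $\omega$ along generic fibres (combined in the thin-fibre case with a section of $\pi|_X$ produced by Lemma \ref{denef}, which reduces the problem to a function on a subset of $K^{n-1}$ where the inductive hypothesis applies) shows that $\{y \in X \mid \omega(y) > 0\}$ is contained in $E \cup W$ up to a set of dimension less than $\dim X$. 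This contradicts $\dim Z = \dim X$ and closes the induction.
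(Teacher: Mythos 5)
You have correctly located the one point where Haskell and Macpherson's original argument (Remark 5.5 of \cite{has-mac-97}) uses definable Skolem functions, and correctly observed that Lemma \ref{denef} supplies them there, since the fibres in question are finite of uniformly bounded cardinality. That observation is in fact the \emph{entire} content of the paper's proof: the authors do not reprove the statement, they simply note that the Haskell--Macpherson induction goes through verbatim once Lemma \ref{denef} replaces the Skolem-function hypothesis. Your attempt to reconstruct that induction from scratch, however, has a genuine hole at exactly the step you flag yourself.

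The problem is the passage from fibrewise (separate) continuity to joint continuity. After removing $E$ (and the set $W$, which is invoked but never actually constructed), you know only that, generically, each one-variable section of $f$ is continuous; this says nothing about continuity of $f$ in all $n$ variables simultaneously, and your final paragraph does not close the gap. Observing that the oscillation is definable is fine --- the set $\{y \in X \mid \omega(y)>0\}$ is precisely $X\setminus Y$ --- but the assertion that ``a final application of Lemma \ref{lemma_5.1} to $\omega$ along generic fibres shows that $\{y\mid \omega(y)>0\}$ is contained in $E\cup W$ up to a set of dimension less than $\dim X$'' is the statement to be proved, not a consequence of anything established earlier; as written the inductive step is circular at its last stage. (Two smaller objections: Lemma \ref{lemma_5.1} applies to $K$-valued functions, whereas $\omega$ takes values in $|K|\cup\{+\infty\}$, and even generic fibrewise continuity of $\omega$ would not force $\omega=0$ generically.) To repair this you cannot argue via separate continuity at all; you would need to import the actual inductive machinery of Remark 5.5 of \cite{has-mac-97}, which runs a dimension count directly on the definable set of discontinuity points. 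Given that, the efficient proof is the paper's: cite that argument and note that its sole appeal to Skolem functions concerns uniformly finite fibres, where Lemma \ref{denef} applies.
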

Recall that the dimension of a definable set $X\subset K^n$ is the greatest integer $k$ for which there exists a projection map $\pi:K^n\to K^k$, such that $\pi(X)$ has non-empty interior in $K^k$ (we refer to \cite{has-mac-97} for more details).

We will also need the fact that the finite fibers of a definable function $f:K \to K$ are uniformly bounded:
\begin{lemma}\label{lemma:uniformbound}
Let $f: K \to K$ be a $P$-minimal function.
There exists an integer $M_f$, such that if the fiber $f^{-1}(y)$ is finite for some $y \in f(K)$, then it contains at most $M_f$ elements.
\end{lemma}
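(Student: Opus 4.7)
The plan is to reduce the lemma to a model-theoretic compactness argument. I would first observe that the set
\[
A := \{y \in f(K) : f^{-1}(y) \text{ is finite}\}
\]
is $\Lm$-definable. Indeed, in any $P$-minimal structure every infinite $\Lm$-definable subset of $K$ has non-empty interior, so $f^{-1}(y)$ is finite precisely when it has empty interior, a condition captured by the first-order formula
\[
\varphi(y) \equiv \forall x\, \forall \delta \in |K^\times|\, \exists x' \, \bigl(|x' - x| < \delta \wedge f(x') \neq y\bigr).
\]

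Next, I would argue by contradiction. Suppose the finite fiber sizes on $A$ are not uniformly bounded and pass to an $\aleph_1$-saturated elementary extension $(K^*, \Lm) \succ (K, \Lm)$. The unboundedness, transferred to $K^*$ by elementarity, makes the partial type
\[
p(y) = \{\varphi(y)\} \cup \bigl\{\exists x_1, \ldots, x_n \text{ pairwise distinct with } f(x_i) = y : n \in \N\bigr\}
\]
finitely satisfiable in $K^*$. By $\aleph_1$-saturation, $p$ is realized by some $y^* \in K^*$. Then $f^{-1}(y^*)$ contains at least $n$ distinct elements for every $n \in \N$, so it is externally infinite. On the other hand, $\varphi(y^*)$ holds, so $f^{-1}(y^*)$ has empty interior in $K^*$. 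Inheritance of $P$-minimality by $K^*$, combined with cell decomposition in the $\Lring$-reduct, then forces $f^{-1}(y^*)$ to be externally finite; contradiction. This yields the desired bound $M_f$.

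The main technical subtlety is the last step---the translation in $K^*$ between ``empty interior'' and genuine \emph{external} finiteness. A priori a realization $y^*$ could have a fiber of non-standard internal cardinality (internally finite yet externally infinite), and the contradiction would collapse. The argument succeeds because cell decomposition in the $\Lring$-reduct bounds the number of cells of any $\Lring$-definable subset by a \emph{standard} integer depending only on the defining formula, a bound that survives elementary extension. This standardness is what converts the empty-interior conclusion in $K^*$ into external finiteness, and thereby completes the argument.
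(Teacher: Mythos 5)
Your argument is correct, but it takes a different route from the paper: the paper disposes of this lemma in one line by citing Lemma 5.3 of Haskell--Macpherson, which is precisely the statement that in a definable family the finite fibers have uniformly bounded cardinality. What you have written is, in effect, a self-contained proof of the relevant instance of that cited lemma via saturation and compactness. The two ingredients you isolate are exactly the right ones: (i) the locus of $y$ with finite fiber is definable because, by $P$-minimality, finiteness of a definable subset of $K$ is equivalent to having empty interior, which is first-order; and (ii) in the saturated extension $K^*$, a definable subset of $K^*$ with empty interior is \emph{externally} finite, which clashes with the realization of your type. For (ii) note that $P$-minimality does pass to $K^*$ --- in the Haskell--Macpherson formulation the condition is imposed on the whole elementary equivalence class, so this is by definition rather than something requiring proof --- and then one can argue slightly more simply than you do: $P$-minimality of $K^*$ already gives that every definable subset of $K^*$ in one variable is a finite (in the metatheory) union of points and open sets, so empty interior forces genuine finiteness without explicitly invoking uniform bounds from cell decomposition. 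Your appeal to the standardness of the cell-count is a correct, if heavier, way to make the same point, and it is indeed the crux that prevents the ``internally finite but externally infinite'' escape you flag. In short: the paper buys the result by citation; your argument buys it from first principles at the cost of a saturation detour, and both are sound.
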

\begin{proof} 
This follows immediately from Lemma 5.3 of \cite{has-mac-97}.
\end{proof}

\subsection{Preliminary lemmas and definitions}
Let us first show that if $K$ is a $p$-adic field, then a $P$-minimal definable function $f: K \to K$ with zero derivative must be piecewise constant.
\begin{proof}[Proof of Lemma \ref{lemma_locally_constant}]
By $P$-minimality, the domain of $f$  is a finite union of points and open sets, so we may as well assume that $\mathrm{dom}(f)$ is an  open set $U$. Fix $\epsilon>0$. For every $x_0\in U$, the fact that $f'(x_0)=0$ implies that there exists $\delta_{x_0}>0$, such that for all $t$ with $\abs{t}<\delta_{x_0}$,
	\begin{equation}\label{vgl_deriv_zero}
		\abs{ f(x_0+t)-f(x_0)} < \epsilon\abs{t}<\epsilon\delta_{x_0}.
	\end{equation}
	Note that we may assume that $\delta_{x_0}\in \abs{K}$. Since every open set in $K$ can be covered by a countable number of disjoint balls, we can write $U = \bigcup_{i=1}^{\infty}B(x_i,\delta_{x_i})$. Formula \eqref{vgl_deriv_zero} implies that $f(B(x_i,\delta_{x_i}))$ is contained in a ball with radius $\epsilon\delta_{x_i}$. Let $\mu$ be the Haar measure on $K$, normalized such that $\mu(\cO_K)=1$. Clearly, $\mu(B(x,\delta))=\delta$ if $\delta\in \abs{K}$. Now estimate the volume of $f(U)$:
\[\mu(f(U)) \leq \sum_{i=1}^{\infty}\mu(f(B(x_i,\delta_{x_i}))) \leq \sum_{i=1}^{\infty}\epsilon\delta_{x_i} = \epsilon \mu\biggl(\bigcup_{i=1}^{\infty}B(x_i,\delta_{x_i})\biggr),\]
hence $\mu(f(U))\leq\epsilon\mu(U)$. Since the choice of $\epsilon>0$ was arbitrary, we conclude that $f(U)$ has measure zero and hence, by $P$-minimality,  is a finite set. One can then partition the domain into a finite union of points and open sets, on each of which the image is constant.
\end{proof}
\begin{lemma}\label{lemma_inj_cons}
	Let $K$ be a $p$-adic field and let $f:X\subset K\to K$ be a function definable in a $P$-minimal structure. There exists a finite partition of $X$ in definable sets $X = \cup_i X _i$ such that for each $i$, $\restr{f}{X_i}$ is either injective or constant. 
\end{lemma}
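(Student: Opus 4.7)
My plan is to split $X$ according to whether the fibers of $f$ are finite or infinite. The finite-fiber part will yield injective pieces via Denef's lemma, while the infinite-fiber part will collapse to finitely many fibers (each giving a constant piece) thanks to Lemma \ref{lemma_locally_constant}.

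First, I set $Y_\infty = \{y \in f(X) : f^{-1}(y) \text{ is infinite}\}$; by the uniform bound of Lemma \ref{lemma:uniformbound}, this set is definable since it equals $\{y : |f^{-1}(y)| > M_f\}$. The crucial claim is that $Y_\infty$ is \emph{finite}. Once this is established, write $Y_\infty = \{y_1,\dots,y_k\}$ and take $F_j = f^{-1}(y_j)$ as constant pieces. On the complementary set $X' = f^{-1}(f(X) \setminus Y_\infty)$, where every fiber has at most $M_f$ elements, I apply Denef's Lemma \ref{denef} to $S = \{(y,x) \in f(X') \times X' : f(x) = y\}$ to extract a definable section whose image is an injective branch of $f$. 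Removing this branch produces a set with fibers of size $\leq M_f - 1$, and iterating at most $M_f$ times exhausts $X'$ as a disjoint union of graphs of sections, each being a definable piece on which $f$ is injective.

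To prove the crucial claim I argue by contradiction. Suppose $Y_\infty$ is infinite; then by $P$-minimality it contains a non-empty open subset $V \subseteq K$. For each $y \in V$ the definable infinite fiber $f^{-1}(y)$ contains an open ball, again by $P$-minimality. Hence the definable set
\[ Z_0 = \{x \in f^{-1}(V) : f \text{ is locally constant at } x\} = \bigcup_{y \in V} \mathrm{int}(f^{-1}(y)) \]
is open, meets every fiber above $V$ in a non-empty open set, and therefore satisfies $V \subseteq f(Z_0)$; in particular $f(Z_0)$ is infinite. On the other hand, $f$ is locally constant on $Z_0$, so the derivative $f'$ vanishes identically on $Z_0$, and Lemma \ref{lemma_locally_constant} furnishes a finite partition of $Z_0$ into sets on each of which $f$ is constant. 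But then $f(Z_0)$ is finite --- contradicting $V \subseteq f(Z_0)$.

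The main obstacle is precisely this last step, where the finiteness of $Y_\infty$ is reduced to Lemma \ref{lemma_locally_constant}; this is what forces the standing hypothesis that $K$ be a $p$-adic field, since that is exactly the context in which Lemma \ref{lemma_locally_constant} is available. Once that ingredient is in place, the remainder of the argument is a routine combination of the uniform fiber bound and definable choice.
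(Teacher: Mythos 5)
Your proof is correct, and it rests on the same two ingredients as the paper's argument — Lemma \ref{lemma_locally_constant} to produce the constant pieces, and the combination of Lemma \ref{lemma:uniformbound} with Denef's Lemma \ref{denef} to iteratively extract injective branches — but the initial decomposition is organized differently. The paper splits $X$ into the definable locus $X_0$ where $f$ is locally constant and its complement $A$: on $X_0$ it applies Lemma \ref{lemma_locally_constant} directly, and the fibers of $\restr{f}{A}$ are automatically finite, since an infinite definable fiber would contain a ball and hence meet $X_0$. You instead split according to fiber size, which forces you to prove separately that only finitely many fibers are infinite; your contradiction argument for this (passing to the open set $Z_0$ of interior points of fibers over $V$ and invoking Lemma \ref{lemma_locally_constant} there) is sound, and is essentially the paper's use of that same lemma in disguise. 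The paper's ordering is marginally more economical, since the finiteness of the fibers over $A$ comes for free; your version has the mild advantage that the constant pieces are entire fibers $f^{-1}(y_j)$ rather than intersections of fibers with the locally constant locus. Both arguments depend on $K$ being a $p$-adic field only through Lemma \ref{lemma_locally_constant}, exactly as you note.
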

\begin{proof}
First note that the piece of the domain on which $f$ is locally constant is a definable set $X_0$, consisting of the points $x\in X$ that satisfy the formula $\phi(x)$:
\[\phi(x) \leftrightarrow (\exists y)(\exists r)(\forall z)[f(x) = y \wedge \abs{z-x}<r \rightarrow f(z) = y].\]
Since $f$ is locally constant on $X_0$,  $f'(x) = 0$ on $X_0$. Applying Lemma \ref{lemma_locally_constant}, we can then partition $X_0$ into a finite number of sets, on each of which $f$ is constant.

 Now consider the set $A = X \setminus X_0$. By $P$-minimality, any fiber $f^{-1}(y)$ with $y \in f(A)$ will be finite. Moreover,  there exists an upper bound $M_f$ for the size of these fibers, because of Lemma \ref{lemma:uniformbound}. 

We can use the following procedure to partition $A$ into a finite number of sets $X_i$, such that $\restr{f}{X_i}$ is injective.

Applying Lemma \ref{denef} to the graph of $\restr{f}{A}$, we can find a definable function $g_1$ that chooses a point $x$ in every fiber $f^{-1}(y)$, for $y \in f(A)$. 
We can then put $X_1 = \{g_1(y) \mid y \in f(A) \}$. Then $\restr{f}{X_1}$ is injective by construction. 

Repeating the procedure for $A \setminus X_1$, we can construct a set $X_2$ on which $f$ is injective, and so on. Lemma \ref{lemma:uniformbound} ensures this algorithm will stop after at most $M_f$ steps, so that we indeed obtain a finite partition. 
\end{proof}
\begin{lemma}\label{derivative_zero}
	Let $K$ be a $p$-adic field and $f:X\subset K\to K$ a $P$-minimal function. There exists a finite subset $I\subset X$ such that for every $x_0$ in $X\setminus I$, with $f(x_0) = y_0$, the following holds:
	
	If $\lvert f'(x_0)\rvert = +\infty$, then $f$ is locally injective around $x_0$ and $(f^{-1})'(y_0)=0$. 
\end{lemma}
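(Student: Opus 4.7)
The plan is to reduce, via $P$-minimality, to the situation where $f$ is injective on an open set $U$ and its inverse is continuous, and then to read off $(f^{-1})'(y_0)=0$ by a direct limit computation.

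First I would invoke Lemma~\ref{lemma_inj_cons} to partition $X$ into finitely many definable pieces, on each of which $f$ is either constant or injective. On the constant pieces $f'$ is identically zero, so the hypothesis $|f'(x_0)|=+\infty$ is vacuous there. Each injective piece is, by $P$-minimality, a finite union of points and open sets; the finitely many singletons can safely be placed into $I$. So I may assume that $f$ is injective on some open set $U\subseteq X$, which already gives local injectivity of $f$ at every point of $U$.

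Next I would analyse the image. Since $f$ is injective on the infinite open set $U$, the definable set $f(U)$ is infinite, so by $P$-minimality it decomposes as an open set $W$ together with a finite set $F$. Adding the (finite) set $f^{-1}(F)\cap U$ to $I$ ensures that $y_0=f(x_0)\in W$ for every remaining $x_0$. Now apply Lemma~\ref{lemma_5.1} to the definable inverse $g:=(f|_U)^{-1}:W\to U$: there is a cofinite subset $W'\subseteq W$ on which $g$ is continuous, and again the finitely many preimages of $W\setminus W'$ under $f$ can be added to $I$. After these reductions, $g$ is defined and continuous on an open neighbourhood of $y_0$ for every $x_0\in U\setminus I$.

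Finally, assume $x_0\in U\setminus I$ satisfies $|f'(x_0)|=+\infty$. For small nonzero $s$, set $t(s):=g(y_0+s)-x_0$, so that $y_0+s=f(x_0+t(s))$ and $t(s)\neq 0$ by injectivity of $f$. Continuity of $g$ at $y_0$ forces $t(s)\to 0$ as $s\to 0$, and then
\[
\frac{g(y_0+s)-g(y_0)}{s}\;=\;\frac{t(s)}{f(x_0+t(s))-f(x_0)},
\]
whose absolute value tends to $0$ directly from the hypothesis $|f'(x_0)|=+\infty$. This yields $(f^{-1})'(y_0)=0$. The main obstacle is securing continuity of the inverse at $y_0$: without it one cannot ensure that $t(s)\to 0$ when $s\to 0$, and the infinite-derivative hypothesis cannot be brought to bear. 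Fortunately Lemma~\ref{lemma_5.1}, applied to the definable function $g$, reduces this to a finite-exception issue that is absorbed into the finite set $I$.
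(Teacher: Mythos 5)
Your proposal is correct and follows essentially the same route as the paper: reduce via Lemma~\ref{lemma_inj_cons} and Lemma~\ref{lemma_5.1} to the case where $f$ is injective on an open set with continuous inverse near $y_0$, then use continuity of the inverse to force $t(s)\to 0$ and conclude from $|f'(x_0)|=+\infty$. The only cosmetic difference is that the paper phrases the last step as a proof by contradiction with explicit $\epsilon$--$M$ bookkeeping, whereas you compute the limit directly.
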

\begin{proof}
	By Lemma \ref{lemma_inj_cons}, one can partition $X$ into a finite number of sets $Y_i$, such that $\restr{f}{Y_i}$ is either injective or constant. Note that one only needs to consider those sets $Y_i$ on which $f$ is injective, since $f'=0$ if $f$ is constant. By Lemma \ref{lemma_5.1} and $P$-minimality there exists a finite set $I$ such that, if we put $X_i = Y_i\setminus I$, then $X_i$ is open, and  $f^{-1}$ is continuous on $f(X_i)$. Let $x_0\in X_i$ be such that $\abs{f'(x_0)}=+\infty$. That $\lvert f'(x_0)\rvert =+\infty$ means that for every $M>1$, there exists $\delta>0$ such that for all $t$ with $\abs{t}<\delta$,
	\begin{equation}\label{vgl_afgeleide_oneindig}
		\left\lvert \frac{f(x_0+t)-f(x_0)}{t}\right\rvert>M.
	\end{equation}
	Now if $(f^{-1})'(y_0)\neq 0$, then there exist $\epsilon>0$ and $s$ arbitrarily close to $0$ such that
	\begin{equation}\label{vgl_afgeleide_niet_nul}
		\left\lvert\frac{f^{-1}(y_0+s)-f^{-1}(y_0)}{s}\right\rvert>\epsilon.
	\end{equation}
	Now choose $M = 1/\epsilon$ and let $\delta$ be such that (\ref{vgl_afgeleide_oneindig}) holds for $M$. By the continuity of $f^{-1}$ around $y_0$, for $s$ close enough to $0$, $f^{-1}(y_0+s)$ lies in $B(x_0,\delta)$. Therefore $f^{-1}(y_0+s)=x_0+t$ for some $t$ with $\abs{t}<\delta$, and hence
	\begin{equation}\label{vgl>M}
		\left\lvert\frac{s}{t}\right\rvert =\left\lvert\frac{y_0+s-y_0}{t}\right\rvert= \left\lvert\frac{f(x_0+t)-f(x_0)}{t}\right\rvert>M,
	\end{equation}
	but then (\ref{vgl_afgeleide_niet_nul}) and (\ref{vgl>M}) imply that
	\begin{equation*}
		\epsilon<\left\lvert\frac{t}{s}\right\rvert<1/M=\epsilon,
	\end{equation*}
	which is a contradiction. 
\end{proof}

To show that an $o$-minimal function $f$ is differentiable, it suffices to check that the left and right derivative of $f$ are equal. Unfortunately, in the $p$-adic case we will have to deal with more possible directions. The next proposition shows that there are only finitely many possibilities. 
\begin{proposition}\label{lemma_dim_graph}
	Let $X\subset K^n$ be a set definable in a $P$-minimal structure. Write $\overline{X}$ for the topological closure of $X$. Then $\dim(\overline{X}\setminus X)<\dim(X)$.
\end{proposition}
\begin{proof}
  	Let $F:\overline{X}\to K$ be the function taking the value $1$ on $X$ and $0$ outside $X$. Applying Lemma \ref{lemma5.5}, we find  that $\dim(\overline{X} \setminus \mathrm{int}(X) )< \dim(\overline{X})$. Since $\dim(X) = \dim( \overline{X})$, a straightforward computation now yields the required result.
\end{proof}
\begin{corollary}\label{cor_finite_values}
	Let $f:K\to K$ be a $P$-minimal function. Then for each $x_0\in K$, the limit $\lim_{t\to 0}(f(x_0+t)-f(x_0))/t$ takes only a finite number of values.
\end{corollary}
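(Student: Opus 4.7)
The plan is to reinterpret the statement as saying that the set of accumulation values of the difference quotient $h(t) := (f(x_0+t)-f(x_0))/t$ as $t\to 0$ is finite, and then to extract this from Lemma \ref{lemma_dim_graph} applied to the graph of $h$.

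First I would define $h:K\setminus\{0\}\to K$ by $h(t)=(f(x_0+t)-f(x_0))/t$. Since $f$ is definable, so is $h$. Let $G=\{(t,h(t))\mid t\in K\setminus\{0\}\}\subset K^2$ be its graph. The projection of $G$ onto the first coordinate is $K\setminus\{0\}$, which has non-empty interior, so $\dim(G)=1$. Next, I would observe that the set of candidate limit values is precisely
\[L = \{y \in K : (0,y) \in \overline{G}\},\]
since $y$ is the limit of $h(t_n)$ for some sequence $t_n\to 0$ (with $t_n\neq 0$) if and only if $(0,y)\in\overline{G}$. Because $0$ is not in the domain of $h$, every point of $\{0\}\times L$ lies outside $G$, so $\{0\}\times L\subseteq \overline{G}\setminus G$.

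The key step is then to invoke Lemma \ref{lemma_dim_graph}, which gives $\dim(\overline{G}\setminus G)<\dim(G)=1$. Hence $\overline{G}\setminus G$ has dimension $0$. In a $P$-minimal structure, a definable subset of $K^n$ of dimension $0$ is finite: otherwise some coordinate projection would yield an infinite definable subset of $K$, which by $P$-minimality contains an open set, contradicting that all projections have empty interior. Therefore $\overline{G}\setminus G$ is finite, and so is $L$.

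I do not see a real obstacle here; the only mildly delicate point is the translation between ``the limit takes only finitely many values'' and ``the slice $\{y: (0,y)\in \overline{G}\}$ is finite,'' plus the routine verification that $0$-dimensional definable sets in a $P$-minimal structure are finite. Both are immediate from the definitions, and once Lemma \ref{lemma_dim_graph} is available the corollary is a direct consequence.
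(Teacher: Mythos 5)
Your proof is correct and is essentially identical to the paper's: both apply Lemma \ref{lemma_dim_graph} to the graph of the difference quotient $t\mapsto (f(x_0+t)-f(x_0))/t$, note that all limit values sit inside $\overline{G}\setminus G$, and conclude finiteness from the fact that a zero-dimensional definable set is finite by $P$-minimality. The only cosmetic difference is that you take the slice $\{y:(0,y)\in\overline{G}\}$ where the paper projects $\overline{G}\setminus G$ onto the second coordinate, which amounts to the same thing.
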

\begin{proof}
	Fix $x_0\in K$ and consider the function 
	\[g:K^\times\to K: t\mapsto \frac{f(x_0+t)-f(x_0)}{t}.\] 
	Since the graph of $g$ has dimension $1$, Proposition \ref{lemma_dim_graph} implies that the set $\overline{\Gamma(g)}\setminus\Gamma(g)$ has dimension zero. This proves the corollary, since all the limit values of $\lim_{t\to 0} (f(x_0+t)-f(x_0))/t$ lie in the projection of $\overline{\Gamma(g)}\setminus\Gamma(g)$ onto the second coordinate, which is a definable subset of $K$ with dimension zero and hence, by $P$-minimality, is finite.
\end{proof}
\begin{definition}
Fix a positive integer $n$ and an element $\lambda\in K$. Define the \emph{directional derivative} along $\lambda$ with respect to $n$ in the point $x_0\in K$ to be 
\begin{equation}\label{eqn_direct_deriv} 
	{f'}^\lambda _{(n)}(x_0) = \lim_{t\to 0,\, t\in\lambda P_n}\frac{f(x_0+t)-f(x_0)}{t},
\end{equation}
if this limit exists. If $n$ is clear from the context we will omit the index $n$ and just write ${f'}^\lambda(x_0)$. \end{definition}
The next lemma and its corollary explain why it suffices to consider these directional derivatives.
\begin{lemma} Let $(K,\Lm)$ be a $P$-minimal structure, $K$ a $p$-adic field, and let $f: K \to K$ be a $P$-minimal function. For every $x_0 \in K$, there exists $n_0 \in \N$, such that for all $n \geqslant n_0$ and all $\lambda \in K$, either the limit ${f'}^\lambda _{(n)}(x_0)$ exists, or $\abs{{f'}^\lambda _{(n)}(x_0)} = +\infty$.
\item Moreover, given any sequence $(t_j)$ with $\lim t_j\to 0$, for which the limit $L= \lim_{j\to \infty}  \frac{f(x_0 + t_j) - f(x_0)}{t}$ exists, there exists $n \in \N$ and $\lambda \in K$ such that this limit $L$ equals $f^{\prime\lambda}_{(n)}(x_0)$.
\end{lemma}
\begin{proof}
Fix $x_0$, and let $g$ be the quotient function $g(t) = \frac{f(x_0+t)-f(x_0)}{t}$.
By Corollary \ref{cor_finite_values}, there exist only finitely many values $y_i$ for which there is a sequence $(t_{j}^{(i)})$ such that $g(t_{j}^{(i)}) \to y_i$ if $t_{j}^{(i)} \to 0$. Choose disjoint balls $B_i$, each containing exactly one of the limit points $y_i$. Let $B$ be a ball with center $0$. Now put $D_i = g(B)\cap B_i$, and $D = g(B) \setminus \cup_i B_i$, so that the sets $g^{-1}(D)$ and $g^{-1}(D_i)$ form a finite partition of $B$ into definable sets.

Clearly, if the sequence $g(t_j^{(i)})$ tends to $y_i$, then (the tail of) the sequence $(t_j^{(i)})$ is contained in $g^{-1}(D_i)$. Similarly, the only sequences contained in $g^{-1}(D)$ are those for which $\abs{g(t_j)} \to +\infty$. To see this, consider a sequence $(t_j)$ with $t_j \to 0$, contained in $g^{-1}(D)$, and assume that $\abs{g(t_j)}$ is bounded for all $j$. 
 Then the set $G= \{g(t_j) \mid j\in \N\}$ is a bounded set, which can be assumed to be infinite. Our assumptions on $K$ imply that the valuation ring $R \subset K$ is compact, and hence the closure $\overline{G}$ must be compact, since for some $m \in \Z$, it is a closed subset of the compact set $\pi^mR$. Therefore, $\overline{G}$ must contain a limit point, which must necessarily be one of the points $y_i$. Since $G \cap (\cup_iB_i) = \emptyset$ by construction, we obtain a contradiction.

Each set $g^{-1}(D)$ or $g^{-1}(D_i)$ can be partitioned in cells $C$. In this way we also get a cell decomposition of $B$. It is easy to check that if $0 \in \overline{C}$, and if we choose $\gamma \in \Gamma_K$ big enough, then for some $n_0 \in \N$ and $\lambda \in \Lambda_{n_0}$, \[C \cap \{\ord(x)> \gamma\} = \lambda P_{n_0} \cap \{\ord(x) > \gamma\},\]  implying that $C$ and $\lambda P_{n_0}$ contain the same sequences converging to 0. (Note that we can use the same value of $n_0$ in all cells). Since the sets $g(C)$, by construction, contain at most one of the points $y_i$, the limits ${f'}_{(n_0)}^{\lambda}(x_0)$ must either be well defined, or 
$\abs{{f'}_{(n_0)}^{\lambda}(x_0)} = +\infty$, and the same obviously holds for all $n\geqslant n_0$.
\end{proof}
\begin{corollary}\label{cor_final}
Let $(K,\Lm)$ be a $P$-minimal structure, $K$ a $p$-adic field, and let $f: K \to K$ be a $P$-minimal function. If for some $x_0 \in K$, the derivative $f'(x_0)$ does not exist, then either there are $\lambda,n$ such that $\abs{f^{\prime\lambda}_{(n)}(x_0)} = +\infty$, 
or, if all directional derivatives are bounded, there exist $n, \lambda, \mu$ such that 
$f^{\prime\lambda}_{(n)}(x_0) \neq f^{\prime\mu}_{(n)}(x_0)$.
\end{corollary}

\subsection{Proofs of the main results (for $p$-adic fields)} \label{sec:mainproofs}
Throughout this section we will assume that we work in a $P$-minimal structure $(K,\Lm)$ and that $K$ is a $p$-adic field. Also, $f$ will always denote a $P$-minimal function. The main step in the proof of Theorem \ref{thm_bijna_overal_afleidbaar} will be to show that sets of the following type are finite.
\begin{definition}\label{def_Sn_Tn}
	For every positive integer $n$ we define
	\begin{align*}S_n &= \left\{x_0\in K\left| \begin{array}{l}\text{the limit $ {f'}_{(n)}^{\lambda}(x_0)$ exists for all $\lambda$ in $\Lambda_n$,} \\ \text{and there exist $\lambda,\mu\in\Lambda_n$ such that }{f'}_{(n)}^{\lambda}(x_0)\neq {f'}_{(n)}^{\mu}(x_0)\end{array}\right\}\right.
		\intertext{and}
		T_n &= \bigl\{x_0\in K\mid \text{ there exists $\lambda\in\Lambda_n$ such that }\abs{{f'}_{(n)}^{\lambda}(x_0)}=+\infty\bigr\}.
\end{align*}
\end{definition}
In order to prove Theorem \ref{thm_bijna_overal_afleidbaar}, it will be sufficient to show that both $\cup_{n}S_n$ and $\cup_{n}T_n$ are finite, because of Corollary \ref{cor_final}. 
\begin{lemma}\label{lemma_Sn_finite}
The set $S_n$ is finite for every $n>0$. 
\end{lemma}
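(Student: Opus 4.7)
I will argue by contradiction. Suppose $S_n$ is infinite. By $P$-minimality $S_n$ contains an open ball, and since $\Lambda_n \times \Lambda_n$ is finite, a pigeonhole argument followed by $P$-minimality again produces an open ball $B$ together with a fixed pair $\lambda \neq \mu$ in $\Lambda_n$ such that $g_\lambda(x) \neq g_\mu(x)$ for every $x \in B$, where I write $g_\alpha(x) := {f'}^\alpha_{(n)}(x)$. Applying Lemma~\ref{lemma_5.1}, I may shrink $B$ so that $f$, $g_\lambda$, and $g_\mu$ are continuous on $B$; combining continuity with the ultrametric inequality then allows a further shrinking so that $|g_\lambda(x) - g_\mu(y)|$ is equal to a fixed value $\delta_0 > 0$ for all $x, y \in B$.

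The heart of the argument is a double expansion of $f$ near a fixed point $x_0 \in B$. Since $P_n$ is an open subgroup of $K^\times$, there exists $k \in \N$ such that $1 + \pi^k \cO_K \subseteq P_n$, and therefore $t + s \in \lambda P_n$ whenever $t \in \lambda P_n$ and $|s| < |\pi|^k |t|$. For such a $t$ (with $x_0 + t \in B$) and for $s \in \mu P_n$ in the same modulus range, I compute $f(x_0 + t + s) - f(x_0)$ in two ways: first as a single $\lambda$-step of magnitude $t+s$, and then by concatenating a $\lambda$-step from $x_0$ to $x_0 + t$ with a $\mu$-step of size $s$. Writing $f(y + u) - f(y) = u\,g_\alpha(y) + u\,e_\alpha(u; y)$ with $e_\alpha(u; y) \to 0$ as $u \to 0$ along $\alpha P_n$, subtraction yields
\begin{equation*}
s\bigl[g_\lambda(x_0) - g_\mu(x_0 + t)\bigr] \;=\; s\, e_\mu(s; x_0 + t) + t\, e_\lambda(t; x_0) - (t+s)\, e_\lambda(t+s; x_0).
\end{equation*}
The left-hand side has absolute value exactly $|s| \delta_0$; to contradict the identity it suffices to shrink $|t|$ until $|e_\lambda(u; x_0)| < \delta_0 |\pi|^{k+2n}$ for every $u \in \lambda P_n$ with $|u| \leq |t|$, and then to select $s \in \mu P_n$ with $|\pi|^{k+2n}|t| \leq |s| < |\pi|^k |t|$ and small enough that $|e_\mu(s; x_0 + t)| < \delta_0$. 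Using $|t+s| = |t|$, each of the three terms on the right-hand side then has absolute value strictly below $|s| \delta_0$, which is the desired contradiction.

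The main obstacle is the final choice of $s$: I need $s \in \mu P_n$ with $|s|$ in the prescribed range. The set of moduli $|\mu P_n| = |\mu| \cdot |\pi|^{n\Z}$ is a multiplicative arithmetic progression of common ratio $|\pi|^n$, whereas the interval $\bigl[|\pi|^{k+2n}|t|,\, |\pi|^k |t|\bigr)$ has multiplicative width $|\pi|^{-2n} > |\pi|^{-n}$, which guarantees it contains at least one value of $|\mu P_n|$. The extra factor $|\pi|^n$ in the lower bound (beyond what is strictly needed to force $t + s \in \lambda P_n$) is precisely what lets the coset $\mu P_n$ realize the required modulus, while the decay $e_\lambda(\cdot; x_0) \to 0$ keeps the construction feasible for all sufficiently small $|t|$.
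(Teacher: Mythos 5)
Your argument is correct and is essentially the paper's own proof: both exploit that $1+\pi^k\mathcal{O}_K\subseteq P_n$ forces $t+s\in\lambda P_n$ whenever $t\in\lambda P_n$ and $|s|\ll|t|$, and derive a contradiction by expanding $f(x_0+t+s)-f(x_0)$ once as a single $\lambda$-increment and once as a $\lambda$-increment followed by a $\mu$-increment, using continuity of the directional derivatives on a ball contained in $S_n$. The paper normalizes ${f'}^{\lambda}(x_0)=0$ and ${f'}^{\mu}(x_0)=1$ instead of carrying the uniform gap $\delta_0$, and simply fixes $|t_\mu|=\epsilon|t_\lambda|$ where you argue explicitly that $|\mu P_n|$ meets the window $\bigl[|\pi|^{k+2n}|t|,|\pi|^{k}|t|\bigr)$; these are only bookkeeping differences.
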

\begin{proof}
	Assume that $S_n$ is infinite for some $n>0$. By $P$-minimality it must then contain a ball $B$. By Lemma \ref{lemma_5.1}, after shrinking $B$ if necessary, we may assume that for every $\lambda\in\Lambda_n$, ${f'}^{\lambda}$ is continuous on $B$. 
	
	Fix $x_0\in B$. By the definition of $S_n$, there exist $\lambda,\mu\in\Lambda_n$ such that ${f'}^{\lambda}(x_0)\neq {f'}^{\mu}(x_0)$. After replacing $f$ by $f(x)-{f'}^{\lambda}(x_0)\cdot x$ and rescaling, we may assume that ${f'}^{\lambda}(x_0) = 0$ and ${f'}^{\mu}(x_0) = 1$. By Hensel's lemma, there exists $m$ such that $1+\pi^m\mathcal{O}_K\subset P_n$. Fix $0<\epsilon<\lvert \pi^m\rvert$. Because ${f'}^{\mu}$ is continuous, the following conditions hold if we choose $t_{\lambda}\in\lambda P_n$ and $t_{\mu}\in\mu P_n$ to be small enough:	\begin{equation}\label{vgl_t_lambda}
		\lvert f(x_0+t_{\lambda})-f(x_0)\rvert<\epsilon\lvert t_{\lambda}\rvert,
	\end{equation}
\begin{equation}\label{vgl_t_mu}
	\lvert f(x_0+t_{\mu})-f(x_0)\rvert=\lvert {f'}^{\mu}(x_0)\rvert\lvert t_{\mu}\rvert,
	\end{equation}
\begin{equation}\label{vgl_cont}
	\lvert {f'}^{\mu}(x_0+t_{\lambda})\rvert = \lvert {f'}^{\mu}(x_0)\rvert = 1.
	\end{equation}
	By changing our choices for $t_\lambda$ and $t_\mu$ (choosing a smaller $\epsilon$ if necessary) we can moreover assume that $\lvert t_{\mu}\rvert = \epsilon\lvert t_{\lambda}\rvert$. By our choice of $m$ and $\epsilon$, we have that $t_{\lambda}+t_{\mu} = t_\lambda(1+\frac{t_\mu}{t_\lambda})\in \lambda P_n(1+\pi^m\mathcal{O}_K)\subset \lambda P_n$. By (\ref{vgl_cont}), equation (\ref{vgl_t_mu}) also holds for $x_0$ replaced by $x_0+t_{\lambda}$, so that
\begin{equation}\label{vgl_contradictie}
	\lvert f(x_0+\tl+\tm)-f(x_0+\tl)\rvert = \lvert {f'}^{\mu}(x_0+\tl)\rvert\lvert\tm\rvert = \lvert \tm\rvert.
\end{equation}
On the other hand, since $t_\lambda+t_\mu\in \lambda P_n$, $\tl$ can be replaced by $\tl+\tm$ in \eqref{vgl_t_lambda}, so
\begin{equation*}
	\lvert f(x_0+\tl+\tm)-f(x_0)\rvert<\epsilon\lvert\tl+\tm\rvert = \epsilon\lvert \tl\rvert.
\end{equation*}
But then $\lvert f(x_0+\tl+\tm)-f(x_0+\tl)\rvert$ is equal to
\begin{equation*} \lvert\left(f(x_0+\tl+\tm)-f(x_0)\right)-\left(f(x_0+\tl)-f(x_0)\right)\rvert<\epsilon\lvert\tl\rvert=\lvert\tm\rvert.
\end{equation*}
This contradicts (\ref{vgl_contradictie}), which finishes the proof.
\end{proof}
\begin{corollary}\label{cor_USn_finite}
	The set $\cup_n S_n$ is finite.
\end{corollary}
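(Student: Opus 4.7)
The plan is to exhibit $\cup_n S_n$ as (a subset of) a single first-order definable set and then use $P$-minimality together with a uniformity argument to force finiteness by reducing to one level $S_N$, which is finite by Lemma \ref{lemma_Sn_finite}.

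First, I record a monotonicity: $S_n\subseteq S_m$ whenever $n\mid m$. Indeed, $n\mid m$ implies $P_m\subseteq P_n$, so each coset $\lambda'P_m$ with $\lambda'\in\Lambda_m$ lies inside a unique coset $\lambda P_n$ with $\lambda\in\Lambda_n$. If $x_0\in S_n$, the finite existence of ${f'}^\lambda_{(n)}(x_0)$ forces the limit along the smaller coset $\lambda'P_m$ to exist and agree with it. Two distinct witnesses in $\Lambda_n$ thus lift to distinct witnesses in $\Lambda_m$, so $x_0\in S_m$. In particular $\cup_n S_n = \cup_k S_{k!}$ is a nested increasing union.

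Next, setting $g_{x_0}(t) = (f(x_0+t)-f(x_0))/t$, I would introduce
\[L = \{x_0\in K : g_{x_0}(t) \text{ has at least two distinct finite limit points as } t\to 0\}.\]
This is cut out by a standard $\epsilon$-$\delta$ formula, so $L$ is a definable subset of $K$, and $\cup_n S_n\subseteq L$ follows directly from the definition of $S_n$. By $P$-minimality, either $L$ is finite (as desired) or $L$ contains an open ball $B$. Suppose the latter: after shrinking $B$ using Lemma \ref{lemma_5.1}, $f$ is continuous on $B$, and combining Denef's Lemma \ref{denef} with the uniform fiber bound of Lemma \ref{lemma:uniformbound} (applied to the definable relation sending $x_0$ to its finite set of limit values of $g_{x_0}$), I can definably select two continuous branches $h_1,h_2\colon B\to K$ giving distinct finite limit points of $g_{x_0}$ at each $x_0\in B$. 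The aim is then to produce a single $N$ and $\lambda,\mu\in\Lambda_N$ such that ${f'}^\lambda_{(N)}(x_0)=h_1(x_0)$ and ${f'}^\mu_{(N)}(x_0)=h_2(x_0)$ for every $x_0$ in some sub-ball $B'\subseteq B$; this would give $B'\subseteq S_N$, contradicting Lemma \ref{lemma_Sn_finite}.

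The main obstacle is extracting this uniform $N$. For each individual $x_0\in B$, the analysis preceding Definition \ref{def_Sn_Tn} furnishes some $n(x_0)$ via a cell decomposition of the graph of the single-variable function $g_{x_0}$ near $t=0$. Transferring this pointwise bound to a bound uniform in the parameter $x_0$ is the delicate step; the natural route is a cell decomposition of the parametrized definable set $\{(x_0,t)\in B\times K^\times\}$, using that only finitely many ``direction types'' appear near $\{t=0\}$ to pigeonhole a fixed $N$ on a sub-ball of $B$.
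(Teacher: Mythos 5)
Your argument has a genuine gap at exactly the point you flag as ``the delicate step'': extracting a single $N$ and a sub-ball $B'$ on which two prescribed limit values are realized as directional derivatives along fixed cosets $\lambda P_N$, $\mu P_N$. Nothing in the paper's toolkit (nor in your sketch) produces this uniformity; the analysis preceding Definition \ref{def_Sn_Tn} is genuinely pointwise, and a parametrized cell decomposition strong enough to pigeonhole a uniform $N$ is not available in a general $P$-minimal structure (the paper does not assume any cell decomposition theorem). Moreover, even granting the uniform $N$, your conclusion $B'\subseteq S_N$ would not follow: membership in $S_N$ requires that \emph{all} directional limits ${f'}^{\lambda}_{(N)}(x_0)$, $\lambda\in\Lambda_N$, exist, whereas you only control two of them, and points of your set $L$ may well have a third direction along which the quotient is unbounded (such points belong to $T_N$, not $S_N$). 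So the contradiction with Lemma \ref{lemma_Sn_finite} is not actually reached. (Your preliminary observation that $S_n\subseteq S_m$ for $n\mid m$ is correct but is not used in an essential way.)

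The paper's proof is much shorter and sidesteps uniformity entirely: since each $S_n$ is finite by Lemma \ref{lemma_Sn_finite}, the union $\cup_n S_n$ is countable; it is also definable in one stroke, by the formula asserting that the difference quotient $(f(x+z)-f(x))/z$ stays bounded as $z\to 0$ while $f'(x)$ fails to exist (this characterizes $\cup_n S_n$ by the discussion following Corollary \ref{cor_finite_values}); and in a $P$-minimal structure over a $p$-adic field a countable definable subset of $K$ must be finite, since an infinite definable set contains a ball. You were on the right track in seeking a single definable set containing the union, but the missing idea is to use \emph{countability} of the union (from the finiteness of each $S_n$) rather than to rerun the ball argument of Lemma \ref{lemma_Sn_finite} uniformly.
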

\begin{proof}
	By Lemma \ref{lemma_Sn_finite}, $\cup_n S_n$ is countable. It therefore suffices to show that $\cup_n S_n$ is definable, because in a $P$-minimal structure every countable, definable subset of $K$ is finite. The following formula $\psi(x)$ expresses that all the directional derivatives are bounded:
\begin{align*}
\psi(x) &\leftrightarrow (\exists t_1,t_2)(\forall z)\biggl[0<\abs{ z} <\abs{ t_1} \rightarrow \left\lvert\frac{f(x+z)-f(x)}{z}\right\rvert <\abs{ t_2}\biggr].\\
\intertext{The formula $\phi(x)$ expresses that ${f'}(x)$ does not exist:}
\phi(x) &\leftrightarrow\neg(\exists L)(\forall t_1)(\exists t_2)(\forall z)\biggl[0<\abs{ z} <\abs{ t_2} \rightarrow \left\lvert\frac{f(x+z)-f(x)}{z}-L\right\rvert <\abs{ t_1}\biggr].
\end{align*}
Hence $\cup_n S_n$ is defined by the formula $\psi(x)\wedge \phi(x)$, because of Corollary \ref{cor_final}.
\end{proof}
\begin{lemma}\label{lemma_Tn_finite}
	The set $T_n$ is finite for every $n>0$.
\end{lemma}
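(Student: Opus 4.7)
The plan is to argue by contradiction, ultimately reducing to Corollary \ref{cor_USn_finite} applied to the inverse of a linear perturbation of $f$. Assume $T_n$ is infinite. By $P$-minimality and the finiteness of $\Lambda_n$, I may pigeonhole to fix $\lambda \in \Lambda_n$ and pass to an open ball $B$ on which $|{f'}^\lambda_{(n)}(x)| = +\infty$ for all $x \in B$. Shrinking $B$ using Lemmas \ref{lemma_inj_cons} and \ref{lemma_5.1}, I may also assume $f$ is continuous and injective on $B$, with $f^{-1}$ continuous on $f(B)$.

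I then split into two cases according to whether the full unrestricted derivative is infinite. If on some subball $B_1 \subset B$ one has $|f'(x)| = +\infty$ for all $x \in B_1$, then Lemma \ref{derivative_zero} gives $(f^{-1})'(f(x)) = 0$ outside a finite exceptional set, so Lemma \ref{lemma_locally_constant} applied to $f^{-1}$ on an open subball of $f(B_1)$ forces $f^{-1}$ to be piecewise constant, contradicting injectivity. Otherwise, every subball of $B$ contains a point with $|f'(x)| \neq +\infty$. Combining $P$-minimality, the finiteness of $\bigcup_n S_n$ from Corollary \ref{cor_USn_finite} (to discard the points where some directional derivative fails to be well-defined in the sense of being either finite or infinite), and pigeonhole over the refinement $\Lambda_{n'}$ for a suitable multiple $n'$ of $n$, I obtain a ball $B'' \subset B$ and some $\mu \in \Lambda_{n'}$ such that $|{f'}^\lambda_{(n)}(x)| = +\infty$ while ${f'}^\mu_{(n')}(x) = c(x)$ exists and is finite for every $x \in B''$, with $c$ continuous on $B''$ after one final shrinking.

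The key step is a linear perturbation $\tilde f(x) = f(x) + Cx$, with $|C|$ strictly larger than every finite value of $|{f'}^\sigma_{(n')}(x)|$ attained for $x \in B''$ and $\sigma \in \Lambda_{n'}$. By the ultrametric inequality, $|\tilde f^\sigma_{(n')}(x)| = |C| \neq 0$ whenever ${f'}^\sigma(x)$ is finite, and $|\tilde f^\sigma_{(n')}(x)| = +\infty$ otherwise, so $\tilde f$ has no vanishing directional derivative on $B''$. Passing to $\tilde g = \tilde f^{-1}$ and translating $\lambda$- and $\mu$-directional behavior through the inverse, the difference quotient $(\tilde g(\tilde y + u) - \tilde g(\tilde y))/u$ at $\tilde y = \tilde f(x)$ has both $0$ and $1/(C + c(x))$ among its limit values as $u \to 0$, while the absence of zero directional derivatives of $\tilde f$ rules out any infinite limit value of $\tilde g$ at $\tilde y$.

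By Corollary \ref{cor_finite_values} applied to $\tilde g$, the set of such limit values is finite, and choosing $n''$ large enough — as in the cell-decomposition discussion at the end of Subsection 2.1 — every $\Lambda_{n''}$-directional derivative of $\tilde g$ at $\tilde y$ either exists or is infinite. Since none is infinite, all exist, and since two of them ($0$ and $1/(C + c(x))$) are distinct, $\tilde y$ lies in the $\tilde g$-analogue of $S_{n''}$; Corollary \ref{cor_USn_finite} applied to $\tilde g$ then forces $\tilde f(B'')$ to be finite, contradicting the injectivity of $\tilde f$ on the open ball $B''$. The main technical obstacle I expect is this final uniformity step, namely identifying the pointwise limit values $0$ and $1/(C + c(x))$ as genuine $\Lambda_{n''}$-directional derivatives of $\tilde g$ with a single $n''$ that works across the relevant $\tilde y$; this should be forced by the same cell-decomposition reasoning used at the end of Subsection 2.1 to match cosets of $P_{n''}$ with the images under $\tilde f$ of $\lambda P_n$ and $\mu P_{n'}$.
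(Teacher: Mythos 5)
Your first case (a subball on which $\abs{f'} = +\infty$ everywhere) is exactly the paper's treatment of $T_n^\infty$: Lemma \ref{derivative_zero} plus Lemma \ref{lemma_locally_constant} applied to $f^{-1}$. For the remaining case the paper does something much more direct: it isolates $T_n^0$, the set of points where some $\lambda$-derivative is infinite and some $\mu$-derivative is finite \emph{for the same} $n$, and reruns the two-direction argument of Lemma \ref{lemma_Sn_finite} with the estimate $\abs{f(x_0+t_\lambda)-f(x_0)}<\epsilon\abs{t_\lambda}$ replaced by $\abs{f(x_0+t_\lambda)-f(x_0)}>M\abs{t_\lambda}$. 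Your route through the inverse of the linear perturbation $\tilde f = f + Cx$ is genuinely different, and the underlying idea is attractive: you arrange to land in an $S$-type set of $\tilde g=\tilde f^{-1}$ rather than a $T$-type set, which is what would keep the argument non-circular. Note also that your detour through a refinement $\Lambda_{n'}$ is unnecessary: on $T_n^0$ both the infinite and the finite direction already live in $\Lambda_n$, so a pigeonhole over $\Lambda_n\times\Lambda_n$ on a ball contained in the definable set $T_n^0$ suffices.

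However, the argument as written has a real gap at the choice of $C$. To exclude infinite directional derivatives of $\tilde g$ at $\tilde y=\tilde f(x)$ you must exclude $0$ as a limit value of the difference quotient of $\tilde f$ at $x$, i.e.\ exclude $-C$ as a limit value for $f$; for this you need $\abs{C}$ to dominate \emph{every} finite directional limit value of $f$ at \emph{every} $x$ in (an infinite subset of) $B''$. You assert such a $C$ exists, but a definable function on a ball need not be bounded (think of $x\mapsto 1/x$ on $\cO_K$), and the directions $\sigma\neq\mu$ are only finite on part of $B''$, where their values may blow up. This is repairable --- the sets $E_C=\{x\in B'' \mid \text{some finite directional limit value has absolute value} \geq \abs{C}\}$ are definable, decreasing, and have empty intersection over $C=\pi^{-k}$, so they cannot all be cofinite in $B''$, and one may shrink $B''$ accordingly --- but without this step the argument fails: if an infinite directional derivative of $\tilde g$ survives, you are reduced to proving finiteness of $\cup_n T_n$ for $\tilde g$, which is circular. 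Two smaller unaddressed points: $\tilde f=f+Cx$ need not inherit injectivity from $f$, so you must re-apply Lemma \ref{lemma_inj_cons} (using that $\tilde f$ is nowhere locally constant on $B''$) before speaking of $\tilde g$; and the continuity of $\tilde g$ near $\tilde y$, needed to translate limit values through the inverse, requires another application of Lemma \ref{lemma_5.1}.
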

\begin{proof}
	We write $T_n = T_n^0 \,\cup\, T_n^\infty$ where
\begin{align*}
	T_n^0 &=\{x_0\in K\mid \exists \lambda,\mu\in\Lambda_n: \abs{ {f'}_{(n)}^\lambda (x_0)}=+\infty\text{ and } \abs{ {f'}_{(n)}^\mu (x_0)}<+\infty\}\\
	\intertext{and}
	T_n^\infty &= \{x_0\in K\mid \forall\lambda\in\Lambda_n: \abs{ {f'}_{(n)}^\lambda (x_0)}=+\infty\}.
\end{align*}
Fix $n>0$. To simplify notation, we will omit the index $n$ and just write ${f'}^\lambda$. The proof of the finiteness of $T_n^0$ is very similar to the proof of the corresponding result for $S_n$. Therefore we only indicate the differences. After rescaling $f$ we may assume that $\abs{ {f'}^\lambda (x_0)}=+\infty$ and ${f'}^\mu (x_0)=1$. Formula \eqref{vgl_t_lambda} should be replaced by $\abs{f(x_0+t_\lambda)-f(x_0)}>M\abs{t_\lambda}$, for a fixed $M>\abs{\pi^{-m}}$, where $m$ is as before. The remainder of the proof is left as an exercise.

 Now suppose $T_n^{\infty}$ were infinite. By $P$-minimality, this set must contain a ball $B$ on which $\abs{f'}=+\infty$. By Lemma \ref{derivative_zero}, we may assume that  $f$ is injective on $B$ and that $(\restr{f^{-1}}{f(B)})'=0$, after shrinking $B$ if necessary. Lemma \ref{lemma_locally_constant} then implies that $\restr{f^{-1}}{f(B)}$ is locally constant, which is clearly impossible.
\end{proof}
\begin{corollary}\label{cor_UTn_finite}
	The set $\cup_n T_n$ is finite.
\end{corollary}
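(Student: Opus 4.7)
The plan is to mirror the strategy of Corollary \ref{cor_USn_finite}: combine the finiteness of each individual $T_n$ (Lemma \ref{lemma_Tn_finite}) with a definability statement, then invoke the standard $P$-minimality fact that every countable definable subset of $K$ is finite.

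Step one is immediate: by Lemma \ref{lemma_Tn_finite}, the union $\bigcup_n T_n$ is a countable union of finite sets, hence countable.

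Step two, the main content, is to show that $\bigcup_n T_n$ is definable by a single formula that does \emph{not} mention $n$. I claim this set coincides with the set of $x_0 \in K$ at which the difference quotient
\[ g(t) = \frac{f(x_0 + t) - f(x_0)}{t} \]
is unbounded as $t \to 0$, i.e.\ the set defined by $\neg \psi(x)$, where $\psi$ is the formula from the proof of Corollary \ref{cor_USn_finite}. One inclusion is trivial: if $x_0 \in T_n$, then $|{f'}^{\lambda}_{(n)}(x_0)| = +\infty$ for some $\lambda \in \Lambda_n$, which forces $|g(t)|$ to be unbounded as $t \to 0$ along $\lambda P_n$, hence unbounded near $0$. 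For the converse, suppose $|g|$ is unbounded near $0$ and pick a sequence $t_j \to 0$ with $|g(t_j)| \to +\infty$. Apply the cell-decomposition discussion from the end of Section 2.1 to $g$ on a small ball $B$ around $0$: after passing to a subsequence, the $t_j$ lie in a single cell $C$ of the decomposition of $B$. This cell cannot be one of the $g^{-1}(D_i)$-cells, because on those $g(t_j)$ would converge to the finite limit $y_i$; hence $C \subseteq g^{-1}(D)$. By the remark made in that same discussion, for $\gamma$ large enough
\[ C \cap \{\ord x > \gamma\} \;=\; \lambda P_n \cap \{\ord x > \gamma\} \]
for some $\lambda \in \Lambda_n$ and some (uniform) $n$. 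Thus the sequence $t_j$, from some point on, lies in $\lambda P_n$, and since the cells of $g^{-1}(D)$ are exactly the ones on which $|g| \to +\infty$, we conclude $|{f'}^{\lambda}_{(n)}(x_0)| = +\infty$, so $x_0 \in T_n \subseteq \bigcup_n T_n$.

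Step three concludes the proof: $\bigcup_n T_n = \{x : \neg \psi(x)\}$ is a definable subset of $K$ which is countable, and by $P$-minimality every countable definable subset of $K$ is finite.

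The only real obstacle is the reverse inclusion in Step two, i.e.\ verifying that mere unboundedness of $g$ near $0$ is witnessed by infiniteness of a directional derivative along some \emph{single} coset $\lambda P_n$. This is where the cell-decomposition argument from the preliminary subsection is essential; once that argument is quoted, the rest of the proof is a short and direct reuse of the definability template of Corollary \ref{cor_USn_finite}.
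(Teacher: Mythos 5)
Your proof is correct and follows essentially the same route as the paper: countability of $\cup_n T_n$ from Lemma \ref{lemma_Tn_finite}, definability via the formula $\neg\psi(x)$ from Corollary \ref{cor_USn_finite}, and then the $P$-minimality fact that countable definable subsets of $K$ are finite. The only difference is that you spell out, via the cell-decomposition discussion, why unboundedness of the difference quotient is equivalent to membership in some $T_n$ --- a point the paper dismisses as ``clearly the case'' --- and your verification of that equivalence is sound.
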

\begin{proof}
	As in the proof of Corollary \ref{cor_USn_finite}, we need to verify that $\cup_nT_n$ is definable. But this is clearly the case, since one can use the formula $\neg\psi(x)$, where $\psi(x)$ is as in the proof of Lemma \ref{cor_USn_finite}.
\end{proof}
\begin{proof}[Proof of Theorem \ref{thm_bijna_overal_afleidbaar} (for $p$-adic fields)]
	By Corollary \ref{cor_USn_finite}, Corollary \ref{cor_UTn_finite} and the discussion right after Definition \ref{def_Sn_Tn}, we know that there is a cofinite, definable set $A\subset K$ on which $f$ is differentiable. Since $A$ is definable, it is a finite union of points and open sets, namely $A = \bigcup_{i=1}^n A_i \cup \bigcup_{i=1}^k\{a_i\}$. 
	
	Applying Theorem \ref{thm_71.2} yields that the definable set \[A_i' = \{x\in A_i\mid f\text{ is strictly differentiable at }x\}\] is dense in $A_i$, for $i=1,\ldots,n$. But then the sets $I_i = A_i\setminus A_i'$ cannot contain any balls, and hence they are finite by $P$-minimality. So $A\setminus\bigcup_{i=1}^n I_i$ is a cofinite set on which $f$ is strictly differentiable.
\end{proof}
We are now ready to give a proof of Theorem \ref{thm_LJP} for $p$-adic fields. For the sake of clarity we will restate the theorem.
\begin{theorem*}[Local Jacobian Property]
	Let $K$ be a $p$-adic field and $f:X\subset K \to K$ a $P$-minimal function. There exists a finite set $I\subseteq X$, and a finite partition of $X\setminus I$ into definable open sets $X_i$, such that either $\restr{f}{X_i}$ is constant on $X_i$, or the following holds on $X_i$: for every $x$ in $X_i$, there is an open ball $B\subset X_i$ containing $x$, such that the map $\restr{f}{B}$ satisfies the following properties:
	\begin{enumerate}
		\item[(a)] \label{a}$\restr{f}{B}$ is a bijection, and $f(B)$ is a ball,
		\item[(b)] \label{b}$f$ is strictly differentiable on $B$ with strict derivative $Df$,
		\item[(c)]\label{c}$\lvert Df\rvert$ is constant on $B$,
		\item[(d)] \label{d} for all $x,y \in B$, one has that $\lvert Df\rvert\lvert x-y\rvert = \lvert f(x)-f(y)\rvert$.
	\end{enumerate}
\end{theorem*}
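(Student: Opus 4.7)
The plan is to combine Theorem \ref{thm_bijna_overal_afleidbaar} with the standard local behaviour of strictly differentiable functions with nonzero derivative in an ultrametric field.

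First, apply Theorem \ref{thm_bijna_overal_afleidbaar} to obtain a finite set $F_0 \subseteq X$ such that $f$ is strictly differentiable on $X \setminus F_0$. By $P$-minimality, $X$ is a finite union of points and open sets; throwing its finitely many isolated points into $F_0$, we may arrange that $X \setminus F_0$ is open, and then $Df$ is continuous on $X \setminus F_0$ by Proposition 27.2 of \cite{sch-84}. The zero locus $\{Df = 0\}$ and non-zero locus $\{Df \neq 0\}$ inside $X \setminus F_0$ are definable, hence by $P$-minimality each is a finite union of points and open sets. Throwing all of their isolated points into $F_0$ as well produces a finite $F \supseteq F_0$ such that
\[
X \setminus F \;=\; U_0 \,\sqcup\, U_*,
\]
where $U_0$ and $U_*$ are open, $Df \equiv 0$ on $U_0$, and $Df \neq 0$ on $U_*$.

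On $U_0$ the ordinary derivative $f' = Df$ vanishes identically, so Lemma \ref{lemma_locally_constant} yields a finite partition of $U_0$ into definable pieces on which $f$ is constant. Together with the singleton pieces $\{x\}$ for $x \in F$ (each trivially constant) and $U_*$ itself, these form a finite partition of $X$ into definable sets $X_i$, with $\restr{f}{X_i}$ constant for every $X_i$ except possibly $U_*$. Setting $I := F$, it remains only to verify (a)--(d) for small enough balls $B \subseteq U_* = U_* \setminus I$.

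Fix $a \in U_*$. Choose $\delta > 0$ so small that $B(a,\delta) \subseteq U_*$, that $|Df(x)| = |Df(a)|$ for all $x \in B(a,\delta)$ (using continuity of $Df$ together with discreteness of $|K^\times|$), and that
\[
\Bigl|\tfrac{f(x)-f(y)}{x-y} - Df(a)\Bigr| \;<\; |Df(a)|
\]
for all distinct $x,y \in B(a,\delta)$ (possible by strict differentiability at $a$). The ultrametric inequality upgrades this last inequality to the exact identity $|f(x)-f(y)| = |Df(a)| \cdot |x-y|$ throughout $B(a,\delta)$, which immediately gives (c), (d), and the injectivity part of (a). For any sub-ball $B = B(a,r) \subseteq B(a,\delta)$ the same identity forces $f(B) \subseteq B(f(a), |Df(a)|\, r)$; surjectivity onto this target ball, and hence the remainder of (a), follows from the standard strict inverse function theorem for strictly differentiable functions on complete ultrametric fields (see Schikhof \cite{sch-84}, \S27), which applies since $K$ is a $p$-adic field.

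The main obstacle is arranging (a)--(d) to hold \emph{simultaneously} on a common small ball around each point of $U_*$. The key device is the ``$\epsilon < |Df(a)|$'' ultrametric trick above, which upgrades the asymptotic linearisation inherent in the definition of $Df$ into an exact local scaled isometry; combined with continuity of $Df$ and the local invertibility of strictly differentiable maps with nonzero derivative, this delivers all four conditions on the same ball $B(a,\delta)$.
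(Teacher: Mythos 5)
Your proof is correct and follows essentially the same route as the paper: split $X\setminus I$ by the vanishing of $Df$ using Theorem \ref{thm_bijna_overal_afleidbaar} and Lemma \ref{lemma_locally_constant}, then use the ultrametric inequality on the difference quotient near a point with $Df(a)\neq 0$ to get the exact scaled isometry, hence (c), (d) and injectivity. The only difference is that where you cite Schikhof's local invertibility theorem for surjectivity onto $B(f(a),|Df(a)|r)$, the paper writes out that argument explicitly via the contraction $g(x)=x-(f(x)-c)/Df(a)$ and the Banach fixed-point theorem.
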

\begin{proof}
	By Theorem \ref{thm_bijna_overal_afleidbaar}, there exists a finite set $I\subset X$ such that $f$ is strictly differentiable on $X \setminus I$. This proves (b).
	Put $X\setminus I = A_0 \, \cup \, A$, with $A_0 = \{x \in X \setminus I \mid Df(x) = 0\}$, and $A = \{x \in X \setminus I \mid Df(x) \neq 0\}$. 
By Lemma \ref{lemma_locally_constant}, $f$ is then piecewise constant on $A_0$. By Lemma \ref{lemma_inj_cons} we can partition $A$ in a finite number of pieces $X_i$, on which $f$ is injective. Moreover, by $P$-minimality, one can assume that each $X_i$ is open (after excluding a finite number of points if necessary).

It remains to check that $(a), (c), (d)$ hold for all points of $X_i$.
Part $(c)$ and $(d)$ are immediate consequences of Theorem \ref{thm_bijna_overal_afleidbaar}: pick any $a \in X_i$. There exists a ball $B \subset X_i$ such that for all $x,y \in B$, it holds that
\[\left\lvert\frac{f(x)-f(y)}{x-y}\right\rvert = \abs{Df(a)} .\] Consequently, we must have that $\abs{Df(a)} = \abs{Df(a')}$ for all $a' \in B$ (since $B$ contains a neighborhood of $a'$), from which $(c)$ and $(d)$ follow.\newline
That $(a)$ holds can be seen as follows (this part of the proof is inspired by Lemma 27.4 of \cite{sch-84}). Fix any $a \in X_i$, and take a ball $B(a,r)$ which is small enough to assure that for all $x,y \in B(a,r)$,
\[ \sup \left\{\left\lvert\frac{f(x)-f(y)}{x-y} - Df(a)\right\rvert: x,y \in B, x\neq y\right\} <\abs{Df(a)}.\]
Clearly this implies that $f(B(a,r)) \subseteq B(f(a), \abs{Df(a)}r)$. It suffices to check that $\restr{f}{B(a,r)}$ is surjective.
Choose $c \in B(f(a), \abs{Df(a)}r)$. We will show that the map $x \mapsto f(x)-c$ has  a zero in $B(a,r)$. For $x \in B(a,r)$, put $g(x)= x - (f(x)-c)/Df(a)$. Then $g$ maps $B(a,r)$ into $B(a,r)$. Moreover, for all $x,y \in B(a,r)$, we have that
\begin{eqnarray*}\abs{g(x) -g(y)} &=& \left\lvert x-y-\frac{f(x) -f(y)}{Df(a)}\right\rvert\\
&=& \left\lvert\frac{x -y}{Df(a)}\right\rvert \left\lvert\frac{f(x) -f(y)}{x-y}-Df(a)\right\rvert\\
&\leq& \tau \abs{x-y},
\end{eqnarray*}  
for some $0<\tau<1$. Since $g: B(a,r) \to B(a,r)$ is a contraction,  the Banach fixed-point theorem yields that $B(a,r)$ contains a point $z$ for which $g(z) =z$, and hence 
$f(z)=c$.
\end{proof}
It is then easy to deduce that Local Monotonicity (as stated in Theorem  \ref{thm:mono}) holds for $p$-adic fields.

We can use the techniques from the proof of the Local Jacobian Property to obtain a generalisation of Lemma \ref{lemma_locally_constant}. This is probably not new, but since we could not find any reference, we give a proof in full detail. Let $\mu$ be the Haar measure on $K$, normalized such that $\mu(\mathcal{O}_K)=1$. We use the notation $\mu(A)=\int_A\abs{\mathrm{d}x}$ for a measurable set $A\subset K$.
\begin{proposition}\label{prop:integral}
	Let $(K,\Lm)$ be a $P$-minimal structure, $K$ a $p$-adic field. Let $X,Y\subset K$ be definable, measurable sets, and let $f:X\to Y$ be a definable bijection that is strictly differentiable. Then
	\[\mu(Y) = \int_X \abs{Df(x)}\abs{\mathrm{d}x},
	\]
	where the equation holds in $\mathbb{R}\cup\{+\infty\}$.
\end{proposition}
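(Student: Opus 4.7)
The plan is to split $X$ according to whether $Df$ vanishes and handle each part by the scaled-isometry behaviour of strictly differentiable ultrametric maps. Set $A = \{x \in X : Df(x) \neq 0\}$ and $A_0 = \{x \in X : Df(x) = 0\}$; since $Df$ is continuous on the open set $X$, the former is open and the latter is relatively closed.

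On the piece $A$, I would exploit exactly the local structure extracted in the proof of Theorem \ref{thm_LJP}. For each $a \in A$, strict differentiability combined with the ultrametric inequality produces a ball $B \ni a$ inside $A$ on which $|Df|$ is the constant $|Df(a)|$ and $|f(x) - f(y)| = |Df(a)| \cdot |x-y|$, and the Banach fixed-point argument used to establish part (a) of Theorem \ref{thm_LJP} forces $f(B)$ to be the ball of radius $|Df(a)| \cdot \mathrm{rad}(B)$ around $f(a)$, so $\mu(f(B)) = |Df(a)|\,\mu(B) = \int_B |Df(x)|\,|\mathrm{d}x|$. Taking for each $a$ the \emph{maximal} ball around $a$ with this property (the maximum is attained because the property is inherited by sub-balls and admissible radii lie in the discrete set $|K|$), the standard ``two $p$-adic balls are equal or disjoint'' observation shows these maximal balls form a countable disjoint partition $A = \bigsqcup_j B_j$. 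Injectivity of $f$ makes the images $f(B_j)$ disjoint, and summing yields
\[\mu(f(A)) = \sum_j \mu(f(B_j)) = \sum_j \int_{B_j} |Df(x)|\,|\mathrm{d}x| = \int_A |Df(x)|\,|\mathrm{d}x|\]
in $[0, +\infty]$.

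On the piece $A_0$, the goal is to show $\mu(f(A_0)) = 0$, after which the integral over $A_0$ is trivially zero. Fix $\epsilon > 0$. For each $a \in A_0$, strict differentiability with $Df(a) = 0$ yields a ball $B \subset X$ around $a$ on which $|f(x) - f(y)| \leq \epsilon |x-y|$, and hence $\mu(f(B)) \leq \epsilon\,\mu(B)$. The same maximal-ball / equal-or-disjoint argument produces a disjoint countable family of balls in $X$ covering $A_0$ and satisfying this estimate. After first intersecting $X$ with a ball of finite Haar measure and exhausting $X$ afterwards, this gives $\mu(f(A_0 \cap B(0, r))) \leq \epsilon\,\mu(B(0, r))$ for each fixed $r$; letting $\epsilon \to 0$ and then $r \to \infty$ yields $\mu(f(A_0)) = 0$.

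To conclude, $X = A \sqcup A_0$ and bijectivity of $f$ gives $Y = f(A) \sqcup f(A_0)$, so $\mu(Y) = \mu(f(A)) + 0 = \int_X |Df(x)|\,|\mathrm{d}x|$. The step I expect to be the main obstacle is the covering argument: in the ultrametric setting this is considerably cleaner than a classical Vitali lemma (no inflation of radii is required), but one still has to verify carefully that the maximal ball with the desired estimate does exist at each point, and that the resulting family of maximal balls tiles the open set $A$ (respectively covers an open neighbourhood of $A_0$) without overlap, using nothing beyond the ``equal or disjoint'' dichotomy for $p$-adic balls.
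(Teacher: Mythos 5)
Your proposal is correct and follows essentially the same route as the paper: the same decomposition of $X$ into $\{Df=0\}$ and $\{Df\neq 0\}$, a countable disjoint-ball cover on which $f$ acts as a scaled isometry onto a ball (via the Local Jacobian Property argument), and sigma-additivity. The only difference is cosmetic: the paper delegates $\mu(f(\{Df=0\}))=0$ to the measure estimate inside the proof of Lemma \ref{lemma_locally_constant}, whereas you rerun that $\epsilon$-covering argument directly (which is arguably cleaner here, since $f$ is not assumed definable), and you spell out the maximal-ball tiling that the paper leaves implicit.
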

\begin{proof}
	Partition $X=X_0\cup X_1$, where $X_0 = \{x\in X\mid Df(x)=0\}$ and $X_1 = \{x\in X\mid Df(x)\neq 0\}$. We proved in Lemma \ref{lemma_locally_constant} that $\mu(f(X_0))=0$, so we may just as well assume that $Df$ is nonzero on all of $X$. Also, we can assume that $X$ is open, after excluding a finite number of points if necessary. Since $K$ is a $p$-adic field, $X$ can be partitioned into a countable union of disjoint balls $B_i$, such that $\abs{Df} = \abs{c_i}$ is constant on $B_i$ and such that $\abs{f(x)-f(y)}=\abs{c_i}\abs{x-y}$ for all $x,y\in B_i$. As in the proof of the Local Jacobian Property, we can argue that $f$ maps $B_i$ bijectively onto a ball $B'_i$ and it can be seen easily that $\mu(f(B_i)) = \abs{c_i}\mu(B_i)$. Since the integrand takes non-negative values, we can use sigma-additivity to compute
\[\int_X \abs{Df(x)}\abs{\mathrm{d}x} = \sum_i\int_{B_i}\abs{Df(x)}\abs{\mathrm{d}x}=\sum_i\abs{c_i}\mu(B_i)=\mu(Y),
\]
which proves the formula.
\end{proof}
\subsection{Generalisation to strictly $P$-minimal structures}\label{sec:def_fam}
Given a definable function $f: A \times K \subseteq K^{n+1} \to K$, we write $\{f_{\alpha}\}_{\alpha \in A}$ for the family of functions whose members are defined by putting $f_\alpha(x) = f(\alpha, x)$. Our results can be generalized to this setting.

For a set $S \subseteq K^{n+1}$, we let $S_{\alpha}$ denote the fiber $S_\alpha = \{x\in K\mid (\alpha,x)\in S\}$. \begin{theorem}[Strict differentiation for definable families]\label{thm_bijna_overal_afleidbaar_param}
	\hspace{-1mm}Let $K$\hspace{-0.4mm} be a $p$-adic field and $f:A\times K\subset K^{n+1}\to K$ a $P$-minimal function. There exists a definable set $S\subset A\times K$ such that for each $\alpha\in A$, $S_\alpha$ is a cofinite subset of $K$ and $f_\alpha$ is strictly differentiable on $S_{\alpha}$.
\end{theorem}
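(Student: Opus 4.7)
The plan is to define $S$ as the set of pairs $(\alpha,x) \in A \times K$ for which $f_\alpha$ is strictly differentiable at $x$, and to verify two things: that $S$ is $\Lm$-definable, and that for each $\alpha \in A$ the fiber $S_\alpha$ is cofinite in $K$. Together these two facts give exactly what the theorem demands.

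For definability, I would write strict differentiability at a point as an explicit first-order $\Lm$-formula in the free variables $(\alpha, x)$. Concretely, $(\alpha,x) \in S$ if and only if
\[
\exists L\,\forall t_1\,\exists t_2\,\forall y\,\forall z\,\Big[\, 0 < \abs{y-x} < \abs{t_2} \wedge 0 < \abs{z-x} < \abs{t_2} \wedge y \neq z \,\rightarrow\, \left|\frac{f(\alpha,y)-f(\alpha,z)}{y-z} - L\right| < \abs{t_1}\,\Big]
\]
holds (compare with the formulas $\phi(x)$ and $\psi(x)$ appearing in the proof of Corollary \ref{cor_USn_finite}). Since $f$ itself is $\Lm$-definable, this exhibits $S$ as an $\Lm$-definable subset of $A \times K$.

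For the cofiniteness of the fibers, I would fix an arbitrary $\alpha \in A$ and observe that the specialization $f_\alpha : K \to K$ is $\Lm$-definable using $\alpha$ as an additional parameter, and therefore qualifies as a $P$-minimal function in the sense of Theorem \ref{thm_bijna_overal_afleidbaar}. Applying that theorem to $f_\alpha$ produces a cofinite subset of $K$ on which $f_\alpha$ is strictly differentiable, and by the very definition of $S$ this subset coincides with $S_\alpha$. Hence $S_\alpha$ is cofinite in $K$ for every $\alpha \in A$.

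I expect no real obstacle in executing this plan: the analytic content is already packaged in Theorem \ref{thm_bijna_overal_afleidbaar}, and uniformity in the parameter comes essentially for free from the first-order nature of strict differentiability. The only mild care required is in phrasing the defining formula in a way consistent with the $\abs{K}$-valued quantification conventions used earlier in the paper, so that the same $\Lm$-definable set $S$ works simultaneously for all parameters $\alpha$.
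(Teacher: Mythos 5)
Your proposal is correct and follows essentially the same route as the paper: define $S$ as the locus of strict differentiability, observe that it is cut out by a first-order $\Lm$-formula in $(\alpha,x)$, and apply Theorem \ref{thm_bijna_overal_afleidbaar} fiberwise to each $f_\alpha$ (definable with parameter $\alpha$) to get cofiniteness of $S_\alpha$. One small point: in your defining formula the clauses $0<\abs{y-x}$ and $0<\abs{z-x}$ should be dropped, keeping only $y\neq z$, since strict differentiability quantifies over all pairs $y\neq z$ tending to $x$, including those with $y=x$; as written the formula would also accept a point where $f_\alpha$ has a removable discontinuity, which is not strict differentiability.
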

\begin{proof}
The strict derivative $Df_{\alpha}(a)$ of $f_{\alpha}$ in a point $a$ can be considered as the partial (strict) derivative
\[ \lim_{(x,y) \to (a,a)} \frac{f(\alpha, x) - f(\alpha, y)}{x-y}\]
of $f$ with respect to the last variable. Let $S$ be the set consisting of points $(\alpha,a)\in A\times K$ such that $f_\alpha$ is strictly differentiable in $a$. It is easy to see that this  is a definable set (the definition is similar to the formula $\phi$ given in the proof of Corollary \ref{cor_USn_finite}). The fact that for each $\alpha\in A$, $S_\alpha$ is a cofinite set, is a direct application of Theorem \ref{thm_bijna_overal_afleidbaar}. 
\end{proof}

Next, we present a uniform version of the Local Monotonicity Theorem for $p$-adic fields. Given sets $S = A \times K$ and $D \subset A$, we write $S^{(D)}$ for the set $S^{(D)} = \{( \alpha, x) \in S \mid \alpha \in D\}$. The fibers will be denoted as $S^{(D)}_{\alpha}$, for $\alpha \in D$.
\begin{theorem}[$p$-adic Local Monotonicity in definable families]\label{thm:mono-family}
	Let $K$ be a $p$-adic field and $f:A\times K\subset K^{n+1}\to K$ a $P$-minimal function. Then there exist definable disjoint subsets $U,V$ of $A\times K$ such that for each $\alpha\in A$ the following conditions hold:
\begin{enumerate}
	 \item $K\setminus (U_\alpha\cup V_\alpha)$ is finite,
	 \item $f_\alpha$ is continuous on $U_\alpha\cup V_\alpha$,
	 \item $f_{\alpha}$ is piecewise constant on $U_{\alpha}$. More specifically, there exists a finite partition of $U$ in definable sets $U_i$, such that for each $\alpha\in A$, the function $f_\alpha$ is constant on each of the fibers $(U_i)_\alpha$,
	\item $f_{\alpha}$ is locally strictly monotone on $V_{\alpha}$.
\end{enumerate}
\end{theorem}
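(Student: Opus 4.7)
The plan is to package the non-parametric Local Jacobian Property together with the parametric strict differentiation theorem, using the uniform finiteness available in $P$-minimal theories to convert fiberwise finite partitions of $U_\alpha$ into a single definable partition of $U$.

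First I would apply Theorem~\ref{thm_bijna_overal_afleidbaar_param} to obtain a definable set $S\subset A\times K$ with $S_\alpha$ cofinite in $K$ and $f_\alpha$ strictly differentiable on $S_\alpha$ for every $\alpha\in A$. Since the strict derivative $Df_\alpha(x)$ is uniformly definable in $(\alpha,x)$ by the same $(\epsilon,\delta)$-style formula used in the proof of that theorem, I can split $S$ into the disjoint definable sets
\[
U=\{(\alpha,x)\in S\mid Df_\alpha(x)=0\},\qquad V=\{(\alpha,x)\in S\mid Df_\alpha(x)\neq 0\}.
\]
Condition~(1) is immediate from $U_\alpha\cup V_\alpha=S_\alpha$, and condition~(2) follows because strict differentiability implies continuity. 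For condition~(4), fix $\alpha\in A$ and apply the Local Jacobian Property (Theorem~\ref{thm_LJP}) to $f_\alpha$: since $Df_\alpha$ is nonzero throughout $V_\alpha$, every sufficiently small ball $B\subset V_\alpha$ is mapped bijectively onto a ball with $|f_\alpha(x)-f_\alpha(y)|=|Df_\alpha|\,|x-y|$, which directly implies the strict local monotonicity condition.

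The main obstacle is condition~(3), i.e.\ producing a single definable partition of $U$ that is valid for every $\alpha$. For each fixed $\alpha$, Lemma~\ref{lemma_locally_constant} applied to $f_\alpha$ on $U_\alpha$ gives a finite partition on which $f_\alpha$ is constant; equivalently, $f_\alpha(U_\alpha)$ is finite. Consider the definable set
\[
W=\{(\alpha,y)\in A\times K\mid \exists x\,((\alpha,x)\in U\text{ and }f(\alpha,x)=y)\},
\]
whose fibers $W_\alpha=f_\alpha(U_\alpha)$ are finite. By the uniform finiteness property of $P$-minimal theories underlying Lemma~\ref{lemma:uniformbound} (Lemma~5.3 of~\cite{has-mac-97}), the sizes $|W_\alpha|$ are bounded by some integer $M$. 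Stratifying $A$ by the value of $|W_\alpha|$ (each stratum being definable) and iterating Lemma~\ref{denef} on each stratum produces definable partial functions $g_1,\ldots,g_M$ whose graphs form a disjoint partition of $W$. Setting $U_i=\{(\alpha,x)\in U\mid g_i(\alpha)\text{ is defined and }f(\alpha,x)=g_i(\alpha)\}$ then yields the required finite definable partition $U=U_1\sqcup\cdots\sqcup U_M$ on whose fibers $f_\alpha$ is constant.
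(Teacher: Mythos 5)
Your proposal is correct and follows essentially the same route as the paper: the same splitting of the parametric strict-differentiability set $S$ into $U=\{Df_\alpha=0\}$ and $V=S\setminus U$, the same appeal to the Local Jacobian Property for strict local monotonicity on $V_\alpha$, and the same combination of Lemma \ref{lemma_locally_constant}, uniform finiteness (Lemma 5.3 of Haskell--Macpherson), and iterated application of Lemma \ref{denef} to make the constancy partition of $U$ uniform in $\alpha$. No substantive differences to report.
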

\begin{proof}
	By Theorem \ref{thm_bijna_overal_afleidbaar_param}, there exists a definable subset $S\subset A\times X$ such that for each $\alpha\in A$, $S_\alpha$ is a cofinite set on which $f_\alpha$ is strictly differentiable. Let $U$ be the set of all points $(\alpha,x)\in S$ such that $Df_\alpha(x)=0$, and put $V=S\setminus U$.  This proves $(1)$ and $(2)$. Part $(4)$ holds as a direct consequence of the Local Jacobian Property applied to $\restr{f_\alpha}{V_\alpha}$.
	
	By Lemma \ref{lemma_locally_constant}, there exists a finite partition of $U_{\alpha}$, such that $f_{\alpha}$ is constant on each part. We will now show that this partition can be taken uniformly in the parameter $\alpha$.

	Let $S_{\text{Im}, \alpha} = \{ y\in K \mid \exists x: y = f_{\alpha}(x)\}$. For each $\alpha\in\pi_n(U)$, where $\pi_n$ denotes the projection on the first $n$ coordinates, this set is finite by Lemma \ref{lemma_locally_constant}. Applying Lemma 5.3 of \cite{has-mac-97} yields that there exists a partition of $\pi_n(U)$ into sets $A_1, \ldots A_k$, and an integer $M$ such that for $\alpha \in A_i$, the set $S_{\text{Im}, \alpha}$ contains at most $M$ elements. Now Lemma \ref{denef} asserts that there is a definable way to choose an element $y_0(\alpha)$ from each $S_{\text{Im}, \alpha}$. So the fibers $f^{-1}(y_0(\alpha))$ (on which $f_{\alpha}$ is constant) are uniformly definable. Repeat the process for the sets $S_{\text{Im}, \alpha}\setminus \{y_0(\alpha)\}$, and so on. The algorithm stops after at most $M$ steps. This concludes the proof of $(3)$.
\end{proof}

The above generalizations to families of definable functions imply that the Local Monotonicity Theorem is valid for any strictly $P$-minimal structure. 
\begin{proof}[Proof of Theorem \ref{thm:mono}]

Let $(K, \Lm)$ be a strictly $P$-minimal structure, and $f: K \to K$ an $\Lm$-definable function.

If the definition of $f$ contains field parameters, one can replace these by variables $\alpha$, and consider $f$ to be a member of a family $\{{g}_{\alpha}\}_{\alpha \in K^n}$, which is defined by a parameter-free formula $\psi(\alpha, x,y)$. This formula $\psi$ can then be interpreted in any $\mathcal{L}$-structure. 

By our assumption, there exists a finite extension $K'$ of $\Q_p$ which has the same $\Lm$-theory as $K$. 
We have already shown that the Local Monotonicity Theorem is valid for families of functions over $K'$, so one only needs to check  that there exists an $\Lm$-sentence asserting this fact. (As $K$ and $K'$ have the same $\Lm$-theory, this will imply that the theorem also holds for the original family $\{g_{\alpha}\}_{\alpha \in K^n}$. Since $f$ is a member of this family, this proves that the Local Monotonicity Theorem holds for $f$.)

It is clear that parts (2), (3) and (4) of Theorem \ref{thm:mono-family} can be expressed using a first order-formula.
For part (1), one can use the fact that in a $P$-minimal structure, a definable set is cofinite if and only if its complement does not contain a ball. This clearly is a first-order condition. 
\end{proof}
By the same reasoning, the proofs of Theorem \ref{thm_bijna_overal_afleidbaar} and Theorem \ref{thm_LJP} can also be generalized to strictly $P$-minimal structures. Therefore, Theorem \ref{thm_bijna_overal_afleidbaar_param} and Theorem \ref{thm:mono-family} also hold for strictly $P$-minimal structures.

\bibliographystyle{plain}
\bibliography{bibliography}
\noindent
TRISTAN KUIJPERS\\
KU LEUVEN\\
DEPARTMENT OF MATHEMATICS\\
CELESTIJNENLAAN 200B\\
3001 LEUVEN, BELGIUM\\
{\it E-mail}: tristan.kuijpers@wis.kuleuven.be\\[8pt]
EVA LEENKNEGT\\
PURDUE UNIVERSITY\\
DEPARTMENT OF MATHEMATICS\\
150 N. UNIVERSITY STREET\\
WEST LAFAYETTE, IN 47907-2067\\
{\it E-mail}: eleenkne@math.purdue.edu\\eva.leenknegt@gmail.com

\end{document}